\documentclass[12pt,a4paper,reqno]{amsart}

\usepackage{amsfonts}
\usepackage{amsmath}
\usepackage{amssymb}
\usepackage{amsthm}
\usepackage{amsxtra}
\usepackage{bbm}
\usepackage{braket}
\usepackage{comment}
\usepackage{extarrows}
\usepackage{latexsym}
\usepackage{mathabx}
\usepackage{mathrsfs}
\usepackage{mathtools} 
\usepackage{verbatim}
\usepackage{xcolor}
\usepackage{cases}

\usepackage[pdfborder={0 0 0}]{hyperref} 
\hypersetup{breaklinks, raiselinks}

\usepackage{bm}
\allowdisplaybreaks[4]

\usepackage[initials,lite]{amsrefs} 
\AtBeginDocument{%
    \def\MR#1{}
}

\usepackage{cleveref}


\usepackage{a4wide}
\setlength{\textheight}{660pt}

\theoremstyle{plain}
\newtheorem{Theorem}{Theorem}[section]
\newtheorem{Lemma}[Theorem]{Lemma}

\newtheorem{Proposition}[Theorem]{Proposition}

\theoremstyle{definition}

\newtheorem{Assumptions and Discussion}[Theorem]{Assumptions and Discussion}

\newtheorem{Definition}[Theorem]{Definition}

\newtheorem{Remark}[Theorem]{Remark}

\theoremstyle{remark}

\newtheorem{Setting}[Theorem]{Setting}

\newtheorem*{acknowledgment*}{Acknowledgment}  

\usepackage[shortlabels]{enumitem}
\SetEnumerateShortLabel{a}{\textup{(\alph*)}}
\SetEnumerateShortLabel{A}{\textup{(\Alph*)}}
\SetEnumerateShortLabel{1}{\textup{(\arabic*)}}
\SetEnumerateShortLabel{i}{\textup{(\roman*)}}
\SetEnumerateShortLabel{I}{\textup{(\Roman*)}}  

\def\abs#1{\lvert#1\rvert}


\def\alert#1{{\textcolor{red}{#1}}}




\def\deg{\operatorname{deg}}
\def\depth{\operatorname{depth}}

\def\dim{\operatorname{dim}}  

\def\sign{\operatorname{sign}}










\def\ker{\operatorname{ker}}
\def\KK{{\mathbb K}}


\def\lk{\operatorname{link}} 



\def\NN{{\mathbb N}}


\def\ZZ{{\mathbb Z}}

\newcommand\bdalpha{{\bm \alpha}}

\newcommand\bdbeta{{\bm \beta}}

\newcommand\bdgamma{{\bm \gamma}}

\newcommand\bdx{{\bm x}}

\newcommand\bdy{{\bm y}}

\newcommand\calF{\mathcal{F}}

\newcommand\fraka{\mathfrak{a}}

\newcommand\frakb{\mathfrak{b}}

\newcommand\frakm{\mathfrak{m}}

\newcommand\frakn{\mathfrak{n}}

\newcommand\frakp{\mathfrak{p}}

\newcommand{\Ass}{\operatorname{Ass}}

\newcommand{\im}{\operatorname{im}}

\def\reg{\operatorname{reg}}
     
\begin{document}

\title{$a_i$-invariants of powers of ideals} 
\author{Shi-Xin Tian and Yi-Huang Shen} 
\thanks{2010 {\em Mathematics Subject Classification}.
    Primary 13D45, 
    Secondary 05E40. 
}

\thanks{Keyword: 
    Local cohomology, $a_i$-invariants, symbolic power, fiber product
}

\address{School of Mathematical Sciences, University of Science and Technology of China, Hefei, Anhui, 230026, P.R.~China}
\email{tsx@mail.ustc.edu.cn} 

\address{Key Laboratory of Wu Wen-Tsun Mathematics, Chinese Academy of Sciences, School of Mathematical Sciences, University of Science and Technology of China, Hefei, Anhui, 230026, P.R.~China}
\email{yhshen@ustc.edu.cn (\textrm{Corresponding author})}

\maketitle

\begin{abstract}
    Inspired by the recent work of Lu and O'Rourke, we study the $a_i$-invariants of (symbolic) powers of some graded ideals. The first scenario is  when $I$ and $J$ are two graded ideals in two distinct polynomial rings $R$ and $S$ over a common field $\KK$.  We study the $a_i$-invariants of the powers of the fiber product via the corresponding knowledge of $I$ and $J$.
    The second scenario is when $I_{\Delta}$ is the Stanley-Reisner ideal of a $k$-dimensional simplicial complex $\Delta$ with $k\ge 2$. We investigate the $a_i$-invariants of the symbolic powers of $I_{\Delta}$. 
\end{abstract} 

\section{Introduction}
Let $S=\mathbb{K} [x_{1},\dots, x_{s}]$ and $R=\mathbb{K} [y_{1},\dots, y_{r}]$ be two polynomial rings over a field $\mathbb{K}$ and $T=S \otimes_{\mathbb{K}} R$. Let $I\subseteq S$ and $J\subseteq R$ be two graded ideals. The \emph{fiber product} of $I$ and $J$ is defined by $F=I+J+\mathfrak{m} \mathfrak{n}$, where $\mathfrak{m}$ and $\mathfrak{n}$ are the graded maximal ideals of $S$ and $R$ respectively. One may observe that $\left(S \otimes_{\mathbb{K}} R\right)/\left(I+J+\mathfrak{m}\mathfrak{n}\right)$ can be decomposed as a direct sum of rings $\frac{S}{I}\oplus \frac{R}{J}$. Furthermore, if $I$ and $J$ are edge ideals of two separate graphs, then $I + J + \mathfrak{m}\mathfrak{n}$ corresponds to the edge ideal of the \emph{join} of the graphs. Fiber products of ideals were studied by many authors; c.f.~\cites{arXiv:1910.14140v2,MR3988200,MR3912960,MR3691985}. But little is known about the $a_i$-invariants of $T/(I+J+\mathfrak{m}\mathfrak{n})^k$ yet.

Recall that when $M$ is a finitely generated $S$-module and $0 \leq i \leq \dim(M)$, the \emph{$a_i$-invariant of $M$} is given by
$$
a_i(M) := \max\{t : H_{\mathfrak{m}}^{i}(M)_t \neq 0\},
$$ 
where $H_{\mathfrak{m}}^{i}(M)$ is the \emph{$i$-th local cohomology} module of $M$ with support in $\frakm$. Notice that $a_{\dim(M)}(M)$ is exactly the \emph{$a$-invariant} introduced by Goto and Watanabe in {\cite{MR494707}}. It plays an important role in local duality, since $-a(M)$ is the initial degree of the canonical module of $M$; see, for instance, \cites{MR494707,MR1251956}. 
The $a_i$-invariant also has a close relation with the \emph{Castelnuovo-Mumford regularity}:
$$
\reg \left(M\right):=\max\left\{a_i\left(M\right) + i : 0 \leq i \leq \dim(M)\right\}.
$$
In fact, the $a_i$-invariant takes an important part in the studying its asymptotic behaviour. For example, Herzog, Hoa and Trung {\cite{MR1881017}} proved that if $J$ is a homogeneous of $R$, then $\reg(R/J^n)$ is a linear function of the form $cn+e$ for $n\gg 0$ via investigating $a_i(R/J^n)$. 
Meanwhile, in {\cite{MR2670214}}, Hoa and Trung showed that $a_i(R/J^n)$ is also asymptotically a linear function of $n$.

Let $s$ be a positive integer. By convention, $[s]$ is short for the set $\{1,2,\dots,s\}$.
Let $G$ be a simple graph on $[s]$ considered as a $1$-dimensional simplicial complex and $G^{\prime}$ be obtained from $G$ by adding an isolated vertex $\{s+1\}$. If $I_G\subseteq S$ and $I_{G^{\prime}}\subseteq S[y]$ are their Stanley-Reisner ideals, then $I_{G^{\prime}}=(I_{G},\mathfrak{m} x_{s+1})$.
Based on this observation, in addition to other beautiful results, Lu {\cite{arXiv:1808.07266}} showed the following important result.

\begin{Theorem}
    [{\cite[Theorem 2.8]{arXiv:1808.07266}}]  
    Let $S=\mathbb{K} [x_{1},\dots, x_{s}]$ be a polynomial ring over a field $\mathbb{K}$. Suppose that $\mathfrak{m}=(x_1,\dots,x_s)$ is the graded maximal ideal of $S$ and $y$ is a new variable over $S$. Suppose $I\subseteq S$ is a monomial ideal and $J=(I,\mathfrak{m} y)\subseteq S[y]$.
    \begin{enumerate}[a]
        \item If $i \geq 2$, then 
            $
            a_i(S[y]/ J^{k})=\max\Set{a_i(S / I^{k-t})+t:0 \leq t \leq k-1}.
            $
        \item If $\sqrt{I} \neq \mathfrak{m}$, then
            $
            a_1(S[y] / J^{k})=\max\Set{2k-2,a_1(S / I^{k-t})+t:0 \leq t \leq k-1}.
            $
    \end{enumerate}
\end{Theorem}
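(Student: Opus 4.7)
The plan is to compare $S[y]/J^k$ with the simpler quotient $S[y]/(I,y)^k$, linked by a short exact sequence whose kernel decomposes into a Cohen-Macaulay piece and an Artinian one. Let $\mathfrak{M}:=(\mathfrak{m},y)S[y]$ denote the graded maximal ideal, so $a_i$ is computed via $H^i_\mathfrak{M}$. A direct $y$-degree computation shows that, as $S$-submodules of $Sy^u$,
\[
(J^k)_u = \begin{cases} I^{k-u}\mathfrak{m}^u, & 0\le u\le k, \\ \mathfrak{m}^k, & u \ge k, \end{cases}
\qquad
((I,y)^k)_u = \begin{cases} I^{k-u}, & 0\le u\le k, \\ S, & u > k. \end{cases}
\]
Since $J^k\subseteq(I,y)^k$, we obtain a short exact sequence
\[
0\longrightarrow K\longrightarrow S[y]/J^k\longrightarrow S[y]/(I,y)^k\longrightarrow 0, \qquad K:=(I,y)^k/J^k,
\]
and I would feed the outer terms into the associated long exact local cohomology sequence.

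For the right-hand term, $y^k\in(I,y)^k$ makes $y$ nilpotent on $S[y]/(I,y)^k$, so $H^i_\mathfrak{M}(-)=H^i_{\mathfrak{m} S[y]}(-)$ on this quotient. The $S$-module decomposition $S[y]/(I,y)^k=\bigoplus_{u=0}^{k-1}(S/I^{k-u})y^u$ then yields
\[
H^i_\mathfrak{M}(S[y]/(I,y)^k)=\bigoplus_{u=0}^{k-1}H^i_\mathfrak{m}(S/I^{k-u})y^u,
\]
whence $a_i(S[y]/(I,y)^k)=\max\{a_i(S/I^{k-t})+t:0\le t\le k-1\}$, which is precisely the right-hand side appearing in the theorem.

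For $K$, the tail submodule $T:=\bigoplus_{u\ge k}(S/\mathfrak{m}^k)y^u\cong(S/\mathfrak{m}^k)\otimes_\KK y^k\KK[y]$ has quotient $F:=\bigoplus_{u=1}^{k-1}(I^{k-u}/I^{k-u}\mathfrak{m}^u)y^u$. Since each summand of $F$ is annihilated by $\mathfrak{m}^u$, the module $F$ is finite-dimensional over $\KK$, hence Artinian, giving $H^0_\mathfrak{M}(F)=F$ and $H^i_\mathfrak{M}(F)=0$ for $i\ge 1$. The module $T$ is Cohen-Macaulay of dimension one, and a Künneth-type computation for $\mathfrak{M}=\mathfrak{m}+(y)$ yields $H^1_\mathfrak{M}(T)\cong(S/\mathfrak{m}^k)\otimes_\KK(\KK[y,y^{-1}]/y^k\KK[y])$ with $a_1(T)=2k-2$, and $H^i_\mathfrak{M}(T)=0$ for $i\ne 1$. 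The long exact sequence of $0\to T\to K\to F\to 0$ then forces $H^i_\mathfrak{M}(K)=0$ for $i\ge 2$ and exhibits $H^1_\mathfrak{M}(K)$ as a quotient of $H^1_\mathfrak{M}(T)$.

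Part~(a) follows immediately from the main long exact sequence, since $H^i_\mathfrak{M}(S[y]/J^k)\cong H^i_\mathfrak{M}(S[y]/(I,y)^k)$ for $i\ge 2$. For part~(b), the same sequence produces
\[
H^0_\mathfrak{M}(S[y]/(I,y)^k)\xrightarrow{\,\delta\,} H^1_\mathfrak{M}(K)\longrightarrow H^1_\mathfrak{M}(S[y]/J^k)\longrightarrow H^1_\mathfrak{M}(S[y]/(I,y)^k)\longrightarrow 0,
\]
which gives the upper bound $a_1(S[y]/J^k)\le\max\{2k-2,\,a_1(S/I^{k-t})+t:0\le t\le k-1\}$. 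The main obstacle will be the matching lower bound: the summands $a_1(S/I^{k-t})+t$ are realized via the surjection onto $H^1_\mathfrak{M}(S[y]/(I,y)^k)$, while producing the value $2k-2$ requires showing that the top-degree class of $H^1_\mathfrak{M}(T)$ survives both the connecting map $H^0_\mathfrak{M}(F)\to H^1_\mathfrak{M}(T)$ coming from $0\to T\to K\to F\to 0$ and the map $\delta$ above. This is precisely where the hypothesis $\sqrt{I}\ne\mathfrak{m}$ enters: it forces the top degrees of $F$ and of $H^0_\mathfrak{M}(S[y]/(I,y)^k)$ to lie strictly below $2k-2$, so the distinguished class in bidegree $(k-1,k-1)$ of $H^1_\mathfrak{M}(T)$ cannot be killed.
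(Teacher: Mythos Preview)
Your approach is genuinely different from the paper's and works cleanly through part~(a) and the upper bound in~(b). The paper does not prove this cited result directly; it proves the general fiber-product version (its Theorem labeled \texttt{Theo}) by splitting the \v{C}ech complex of $T/F^k$ into an $x$-part and a $y$-part for $j\ge 1$, and for $j=1$ it gets the upper bound from the Nguyen--Vu filtration $G_t=H^k+\sum_{i\le t}(\mathfrak m\mathfrak n)^{k-i}J^i$ together with repeated long exact sequences. Your route via $0\to K\to S[y]/J^k\to S[y]/(I,y)^k\to 0$ and the further splitting $0\to T\to K\to F\to 0$ is more elementary and gives the same conclusions for $i\ge 2$ and for the inequality $a_1\le\max\{2k-2,\dots\}$, without invoking the Nguyen--Vu lemma.

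However, your argument for the lower bound $a_1(S[y]/J^k)\ge 2k-2$ has a real gap. You claim that $\sqrt I\ne\mathfrak m$ forces the top degree of $F=\bigoplus_{u=1}^{k-1}(I^{k-u}/I^{k-u}\mathfrak m^u)y^u$ (and of $H^0_{\mathfrak M}(S[y]/(I,y)^k)$) to lie strictly below $2k-2$. This is false: take $S=\KK[x_1,x_2]$, $I=(x_1^3)$, $k=2$. Then $\sqrt I=(x_1)\ne\mathfrak m$, yet $F=(I/I\mathfrak m)y=\KK\cdot x_1^3 y$ sits in total degree $4>2=2k-2$. So degree reasons alone cannot prevent the connecting maps from killing your distinguished class.

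What actually saves the argument is a bidegree-by-bidegree analysis in $(k-1,k-1)$. The image of $\delta_1\colon F\to H^1_{\mathfrak M}(T)$ in that bidegree is exactly $I_{k-1}\cdot y^{k-1}\subseteq S_{k-1}\cdot y^{k-1}$, and the source of $\delta_2$ in that bidegree is $\bigl((I:\mathfrak m^\infty)/I\bigr)_{k-1}\cdot y^{k-1}$. Thus a class $\bar m\,y^{k-1}$ with $m\in S_{k-1}$ survives both maps precisely when $m\notin I:\mathfrak m^\infty$. The hypothesis $\sqrt I\ne\mathfrak m$ guarantees such an $m$ exists (otherwise $\mathfrak m^{k-1}\subseteq I:\mathfrak m^\infty$ would force $\mathfrak m^N\subseteq I$ for some $N$). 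This is essentially the same element the paper exhibits as an explicit \v{C}ech $1$-cocycle in its Claim~2; you just need to replace the incorrect ``top degree'' assertion with this computation.
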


Notice that the ideal $(I,\mathfrak{m} y)$ above can also be considered as a fiber product of $I \subseteq S$ and $0 \subseteq \mathbb{K}[y]$. It is then very natural to ask: what can be said towards $a_i(T/ (I+J+\mathfrak{m}\mathfrak{n})^{k})$ in a more general framework? We will answer this in \Cref{Theo}.

Next, we turn our attention to the Stanley-Reisner ideal of simplicial complexes. 
Suppose that $\Delta$ is a simplicial complex on $[s]$ and $I_{\Delta}$ is the Stanley-Reisner ideal of $\Delta$ in $S=\KK[x_1,\dots,x_s]$.
We will deal with the its powers $I_\Delta^n$ and its symbolic powers $I_\Delta^{(n)}$.
The symbolic powers of ideals have a nice geometric description, due to Zariski and Nagata {\cite[Theorem 3.14]{MR1322960}}. The research of related topics has continuously attracted the attention of many researchers; see for instance the recent survey \cite{MR3779569} and the references therein.

Previous related work mainly focuses on symbolic powers of $2$-dimensional squarefree ideals. In {\cites{MR2558862,MR3482347}}, the $a_i$-invariants of symbolic powers of Stanley-Reisner ideals was described explicitly in this case. And in {\cite{arXiv:1808.07266}}, the author proved that for any $1$-dimensional complex $\Delta$ without isolated vertex, one has $a_2(S/I_{\Delta}^{(n)})=a_2(S/I_{\Delta}^{n})$. From these phenomena, it is natural to ask whether $a_{k+1}(S/I_{\Delta}^{(n)})=a_{k+1}(S/I_{\Delta}^{n})$ always holds and under what conditions will $a_{k+1}(S/I_{\Delta}^{(n)})$ be maximal when $\dim(\Delta)=k\geq2$. We will give definite answers to these two questions in \Cref{thm:3.8} and \Cref{Theo4.9}.


\section{$a_i$-invariants of powers of fiber product ideal} 

In this section, we will always assume the following settings.

\begin{Setting}
    \label{setting-section-2}
    Let $S=\mathbb{K}[x_{1},\dots, x_{s}]$ and $R=\mathbb{K}[y_{1},\dots, y_{r}]$ be two polynomial rings over a common field $\mathbb{K}$ and $\mathfrak{m}$ and $\mathfrak{n}$ be the corresponding graded maximal ideals respectively. Let $I \subseteq\mathfrak{m}$ and $J \subseteq \mathfrak{n}$ be two graded ideals and $F=I+J+\mathfrak{m}\mathfrak{n}$ the {fiber product} of $I$ and $J$ in $T=S \otimes_{\mathbb{K}} R$. Fix a positive integer $k$.
\end{Setting}

The aim of this section is to describe the $a_i$-invariants of $T/ F^k$ via the corresponding information of $I$ and $J$. 

Let us start by recalling some pertinent facts of local cohomology and \v{C}ech complex.

\begin{Definition}
    Let $M$ be an $S$-module $M$ and $\fraka$ be an $S$-ideal.
    \begin{enumerate}[a]
        \item Set  
        $$ 
        \Gamma_{\fraka}\left(M\right)\coloneqq\{x \in M : \fraka^tx=0  \text{ for some } t \in \mathbb{N}\}. 
        $$
        Let $ H_{\fraka}^{i}\left(-\right)$ be the $i$-th right derived functor of $\Gamma_{\fraka}\left(-\right)$, namely $ H_{\fraka}^{i}\left(M\right)\coloneqq H^j\left(\Gamma_{\fraka}\left(I^{\bullet}\right)\right)$, in which $I^{\bullet}$ is an injective resolution of $M$. The module $H_{\fraka}^{i}\left(M\right)$ will be called the \emph{$i$-th local cohomology} of $M$ with support in $\fraka$. 
        \item  The module $M$ is called \emph{$\fraka$-torsion} if $\Gamma_{\fraka}\left(M\right)=M$, namely, if each element of $M$ is annihilated by some power of $\fraka$. 
    \end{enumerate}
\end{Definition} 

Next, we collect some well-known facts from \cite[Propositions 7.3 and 7.15]{MR2355715} and \cite[1.2.2 (iv), 2.1.7 Corollary and Exercise 2.1.9]{MR3014449} regarding local cohomology modules.

\begin{Lemma}
    \label{prop:2.2}
    Let $M$ be an $S$-module and $\fraka$ be an $S$-ideal.
    \begin{enumerate}[a]	
        \item  \label{2.2.2} Let $\{M_{\gamma}\}$ be a family of $S$-modules. Then $H_{\fraka}^{j}\left(\oplus_{\gamma}M_{\gamma}\right) \cong \oplus_{\gamma}H_{\fraka}^{j}\left(M_{\gamma}\right)$ for all $j\ge 0$.
        \item  \label{2.2.3} If $S \rightarrow R$ is a ring homomorphism and $N$ is an $R$-module, then $H_{\fraka}^j(N)=H_{\fraka R}^j(N)$.
        \item  \label{2.2.4} Any short exact sequence of $S$-modules $0\rightarrow M \rightarrow N \rightarrow L \rightarrow 0$ induces a long exact sequence of local cohomology modules
            $$
            \cdots\rightarrow H_{\fraka}^{j}\left(M\right) \rightarrow H_{\fraka}^{j}\left(N\right) \rightarrow H_{\fraka}^{j}\left(L\right) \rightarrow H_{\fraka}^{j+1}\left(M\right) \rightarrow \cdots.
            $$
        \item \label{2.2.5} Assume that $M$ is $\frakb$-torsion for some $S$-ideal $\frakb$. Then, $H^j_{\fraka+\frakb}(M) \cong H^j_{\fraka}(M)$ for all $j\ge 0$.
        \item \label{2.2.6} If $M$ is $\fraka$-torsion, then $H_{\fraka}^{j}\left(M\right)=0$ for all $j>0$.
    \end{enumerate}
\end{Lemma}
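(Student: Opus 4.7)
I would prove all five parts uniformly via the \v{C}ech complex description of local cohomology. Since $S$ is Noetherian, for any ideal $\fraka\subseteq S$ with generating set $\bdf=(f_1,\dots,f_n)$ and any $S$-module $M$ one has $H_{\fraka}^{j}(M)\cong H^{j}(\check{C}(\bdf;M))$, where $\check{C}(\bdf;M)$ is the complex assembled from the localizations $M_{f_{i_1}\cdots f_{i_p}}$. Each item then reduces to an elementary manipulation of this complex.

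Parts (a)--(c) are immediate. For (a), every term of $\check{C}(\bdf;-)$ is a localization, which commutes with arbitrary direct sums; taking cohomology preserves the isomorphism. For (b), given an $R$-module $N$ and $f\in S$ mapping to $\bar f\in R$, the localization $N_{f}$ of $N$ viewed as an $S$-module coincides with $N_{\bar f}$, so $\check{C}(\bdf;N)=\check{C}(\bar{\bdf};N)$ and the two local cohomologies agree. Part (c) is the standard long exact sequence for the right-derived functors of $\Gamma_{\fraka}(-)$, or equivalently the long exact sequence attached to the short exact sequence $0\to\check{C}(\bdf;M)\to\check{C}(\bdf;N)\to\check{C}(\bdf;L)\to 0$ of complexes.

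The real content sits in (d) and (e). For (d), let $\bdg=(g_1,\dots,g_t)$ generate $\frakb$, so that $\bdf\cup\bdg$ generates $\fraka+\frakb$. If $M$ is $\frakb$-torsion, then each $g_j$ is locally nilpotent on $M$, hence $M_{g_j}=0$; consequently every term of $\check{C}(\bdf\cup\bdg;M)$ that involves some $g_j$ vanishes, and the complex collapses to $\check{C}(\bdf;M)$. Passing to cohomology gives (d). Part (e) follows directly from the same collapse: if $M$ is $\fraka$-torsion then $M_{f_i}=0$ for each generator $f_i$ of $\fraka$, so every positive-degree term of $\check{C}(\bdf;M)$ vanishes, yielding $H^{0}_{\fraka}(M)=M$ and $H^{j}_{\fraka}(M)=0$ for $j>0$. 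The only nontrivial ingredient is the foundational isomorphism $H_{\fraka}^{j}(M)\cong H^{j}(\check{C}(\bdf;M))$ itself; once this is invoked, no further obstacle arises, and the lemma is essentially mechanical.
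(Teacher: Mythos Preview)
Your proof is correct. The paper, however, does not actually prove this lemma: it is introduced as a collection of ``well-known facts'' with pointers to \cite[Propositions 7.3 and 7.15]{MR2355715} and \cite[1.2.2 (iv), 2.1.7 Corollary and Exercise 2.1.9]{MR3014449}, and no argument is given in the text. So there is no ``paper's own proof'' to compare against beyond those citations.

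That said, your uniform \v{C}ech-complex approach is a perfectly good substitute for those references and is in the spirit of the paper, which later invokes \Cref{thm:2.4} to compute local cohomology via the \v{C}ech complex anyway. The only implicit hypothesis you lean on is that $S$ is Noetherian (so that $H_{\fraka}^{j}(M)\cong H^{j}(\check{C}(\bdf;M))$ holds); in the paper's setting $S$ is a polynomial ring over a field, so this is harmless, but it is worth making explicit since the lemma as stated does not mention it. With that caveat, each of your reductions is valid: localization commutes with direct sums and is exact (giving (a) and (c)); the \v{C}ech complex of an $R$-module is insensitive to whether one localizes at $f_i\in S$ or its image in $R$ (giving (b)); and the vanishing of $M_{g_j}$ for $\frakb$-torsion $M$ collapses the augmented \v{C}ech complex exactly as you describe (giving (d) and then (e) as the special case $\fraka=\frakb$).
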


Our argument afterwards also depends heavily on the computation of local cohomologies in terms of \v{C}ech complexes.

\begin{Definition}
    For elements $m_1,\dots,m_r$ in a commutative ring $R$, set $m_{\sigma}=\prod_{i \in \sigma} m_{i}$ for $\sigma \subseteq[r]$. The \emph{\v{C}ech complex} $\check{\mathcal{C}}^{\bullet}\left(m_{1}, \dots, m_{r}\right)$ is the cochain complex (upper indices increasing from the copy of $R$ sitting in cohomological degree $0$)
    $$
    0 \rightarrow R \rightarrow \bigoplus_{i=1}^{n} R\left[m_{i}^{-1}\right] \rightarrow \dots \rightarrow \bigoplus_{|\sigma|=k} R\left[m_{\sigma}^{-1}\right] \rightarrow \dots \rightarrow R\left[m_{[r]}^{-1}\right] \rightarrow 0,
    $$
    with the map     
    $$
    \partial_{\abs{\sigma}}^i:R[m^{-1}_{\sigma}]\rightarrow R[m^{-1}_{\sigma \cup \{i\}}]
    $$ 
    between the summands in $\check{\mathcal{C}}^{\bullet}\left(m_{1}, \dots, m_{r}\right)$ being $\sign(i, \sigma \cup \{i\})$ times
    the canonical localization homomorphism. 
\end{Definition}

\v{C}ech complex facilitates the computation of local cohomologies.

\begin{Lemma}
    [{\cite[Theorem 13.75]{MR2110098}}]
    \label{thm:2.4}
    The local cohomology of $M$ supported on the ideal $\fraka=(m_1,\dots,m_r)$ in $R$ is the cohomology of the \v{C}ech complex tensored with $M$:
    $$
    H_{\fraka}^{i}(M)=H^{i}\left(M \otimes \check{\mathcal{C}}^{\bullet}\left(m_{1}, \dots, m_{r}\right)\right).
    $$
\end{Lemma}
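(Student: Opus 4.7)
The plan is to identify both sides as values of the same universal cohomological $\delta$-functor extending $\Gamma_{\fraka}(-)$ in degree $0$. Write $\check{H}^{i}_{\fraka}(M) := H^{i}(M \otimes_{R} \check{\mathcal{C}}^{\bullet}(m_{1},\dots,m_{r}))$. I would verify three properties: (i) $\check{H}^{0}_{\fraka}(M) = \Gamma_{\fraka}(M)$, (ii) $\check{H}^{i}_{\fraka}$ form a cohomological $\delta$-functor, and (iii) $\check{H}^{i}_{\fraka}(E) = 0$ for every injective $R$-module $E$ and $i>0$. Combined with the same three properties known for $H^{i}_{\fraka}$, the universality of derived functors (or equivalently, dimension shifting through an injective resolution) gives the isomorphism.

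For step (i), the degree-zero piece of the \v{C}ech complex identifies $\check{H}^{0}_{\fraka}(M)$ with the kernel of $M \to \bigoplus_{i=1}^{r} M[m_{i}^{-1}]$. An element $x \in M$ lies in this kernel iff $m_{i}^{t_{i}} x = 0$ for each $i$ and some $t_{i}$, which is equivalent to $\fraka^{N} x = 0$ for $N = \sum_{i}t_{i} - r + 1$, giving exactly $\Gamma_{\fraka}(M)$. For step (ii), localization is flat over $R$, so each summand $R[m_{\sigma}^{-1}]$ is a flat $R$-module; hence tensoring a short exact sequence of $R$-modules with $\check{\mathcal{C}}^{\bullet}$ yields a short exact sequence of complexes, whose induced long exact sequence in cohomology provides the connecting maps. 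Naturality is immediate from functoriality of localization.

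The main obstacle is step (iii), the effaceability on injectives. Assuming $R$ is Noetherian (which we have throughout, since we work with polynomial rings and their quotients), every injective $R$-module decomposes as a direct sum of indecomposable injectives $E(R/\frakp)$, and by \Cref{prop:2.2}\eqref{2.2.2} the \v{C}ech cohomology commutes with direct sums (each term $E[m_{\sigma}^{-1}]$ is a localization, which preserves direct sums). Thus it suffices to treat $E = E(R/\frakp)$. I would split into two cases according to whether $\fraka \subseteq \frakp$ or not. If $\fraka \not\subseteq \frakp$, pick some $m_{i} \notin \frakp$; then multiplication by $m_{i}$ is bijective on $E(R/\frakp)$, so $E[m_{i}^{-1}] = E$ and the localization maps for subsets $\sigma$ containing $i$ versus not containing $i$ give a contractible subcomplex, forcing $\check{H}^{\geq 1}_{\fraka}(E)=0$. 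If $\fraka \subseteq \frakp$, every $m_{i}$ acts locally nilpotently on $E(R/\frakp)$, so $E[m_{i}^{-1}] = 0$; then the \v{C}ech complex tensored with $E$ is concentrated in degree $0$, and the higher cohomology vanishes.

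Having established (i)-(iii) for $\check{H}^{i}_{\fraka}$, I would conclude by a standard dimension-shifting argument: embed $M$ into an injective $E^{0}$ with cokernel $M^{1}$; by the long exact sequences for both $\delta$-functors and the vanishing on $E^{0}$, we get $H^{i}_{\fraka}(M) \cong H^{i-1}_{\fraka}(M^{1})$ and $\check{H}^{i}_{\fraka}(M) \cong \check{H}^{i-1}_{\fraka}(M^{1})$ for $i \geq 2$, while for $i=1$ both fit into the same five-term sequence determined by $\Gamma_{\fraka}$. Iterating along an injective resolution of $M$ then yields a canonical natural isomorphism $H^{i}_{\fraka}(M) \cong \check{H}^{i}_{\fraka}(M)$ for all $i \geq 0$, as desired.
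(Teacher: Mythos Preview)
The paper does not supply its own proof of this lemma; it is quoted verbatim from \cite[Theorem~13.75]{MR2110098} and used as a black box. There is therefore nothing in the paper to compare your argument against.

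That said, your outline is the standard and correct derivation: identify both $H^{i}_{\fraka}(-)$ and $\check{H}^{i}_{\fraka}(-)$ as universal cohomological $\delta$-functors extending $\Gamma_{\fraka}$, verify effaceability of $\check{H}^{i}_{\fraka}$ on injectives via the Matlis decomposition (which is where the Noetherian hypothesis you flag is genuinely needed), and conclude by dimension shifting along an injective resolution. One small point worth tightening in step~(iii): when $m_{i}\notin\frakp$, the cleanest way to see acyclicity is to observe that $\check{\mathcal{C}}^{\bullet}(m_{1},\dots,m_{r})$ is the tensor product of the two-term complexes $R\to R[m_{j}^{-1}]$, and the $i$-th factor becomes $E\xrightarrow{\id}E$ after tensoring with $E$, which kills all cohomology. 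Your phrasing ``contractible subcomplex'' gestures at this but is a bit imprecise.
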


The following results are also crucial for our argument in this section. 

\begin{Lemma}
    [{\cite [Proposition 4.4]{MR3988200}}]
    \label{prop:2.5}
    Take the assumptions as in \Cref{setting-section-2}. Suppose in addition that $I \subseteq\mathfrak{m}^2$ and $J \subseteq\ \mathfrak{n}^2$. Furthermore, let $H = I + \mathfrak{m}\mathfrak{n}$. 
    \begin{enumerate}[a] 
        \item There is an equality $F^k=H^k+\sum_{i=1}^{k}(\mathfrak{m}\mathfrak{n})^{k-i}J^i$ for each positive integer $k$.
        \item For each $1\leq t\leq k$, denote $G_t = H^k+\sum_{i=1}^{t}(\mathfrak{m}\mathfrak{n})^{k-i}J^i$ and $G_0=H^k$. Then, one has $G_{t-1} \cap(\mathfrak{m} \mathfrak{n})^{k-t} J^{t}=\mathfrak{m}^{k-t+1} \mathfrak{n}^{k-t} J^{t}$ for each $t$.
    \end{enumerate}	
\end{Lemma}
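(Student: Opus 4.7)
The plan is to handle the two parts with a shared toolkit, leveraging the hypotheses $I \subseteq \mathfrak{m}^{2}$ and $J \subseteq \mathfrak{n}^{2}$ to split and redistribute factors. For part (a), I would start from $F = H + J$ and expand
\[
F^{k} = \sum_{i=0}^{k} H^{k-i} J^{i} = \sum_{i=0}^{k} \sum_{\ell=0}^{k-i} I^{\ell}\,(\mathfrak{m}\mathfrak{n})^{k-i-\ell}\, J^{i}.
\]
The $\ell = 0$ terms give the stipulated summands $(\mathfrak{m}\mathfrak{n})^{k-i} J^{i}$, so the task reduces to showing that each mixed summand with $\ell \geq 1$ lies in $H^{k} + \sum_{j=1}^{k}(\mathfrak{m}\mathfrak{n})^{k-j} J^{j}$. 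The key identity is $IJ \subseteq (\mathfrak{m}\mathfrak{n})^{2}$: for $\alpha = uu' \in \mathfrak{m}^{2}$ and $\gamma = vv' \in \mathfrak{n}^{2}$, one computes $\alpha\gamma = (uv)(u'v') \in (\mathfrak{m}\mathfrak{n})^{2}$. Pairing $\min(\ell,i)$ copies of $I$ against $J$ in the mixed summand pushes it into either $I^{\ell-i}(\mathfrak{m}\mathfrak{n})^{k+i-\ell} \subseteq H^{k}$ (when $\ell \geq i$) or $(\mathfrak{m}\mathfrak{n})^{k-(i-\ell)} J^{i-\ell}$ (when $\ell < i$, with index $i-\ell \in \{1,\dots,i-1\}$), both of which are absorbed into the target sum. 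The reverse containment is immediate from $\mathfrak{m}\mathfrak{n} \subseteq F$.

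For part (b), the inclusion $\supseteq$ uses the same splitting trick once more. The containment $\mathfrak{m}^{k-t+1}\mathfrak{n}^{k-t} J^{t} \subseteq (\mathfrak{m}\mathfrak{n})^{k-t} J^{t}$ is transparent from $\mathfrak{m}^{k-t+1}\mathfrak{n}^{k-t} = \mathfrak{m}\,(\mathfrak{m}\mathfrak{n})^{k-t}$. For the other containment $\mathfrak{m}^{k-t+1}\mathfrak{n}^{k-t} J^{t} \subseteq G_{t-1}$, I take a generator $x \beta j_{1} \cdots j_{t}$ with $x \in \mathfrak{m}$, $\beta \in (\mathfrak{m}\mathfrak{n})^{k-t}$, $j_{\ell} \in J$; using $xj_{1} \in \mathfrak{m}\mathfrak{n}^{2} = (\mathfrak{m}\mathfrak{n})\cdot\mathfrak{n}$, the generator rewrites as an element of $(\mathfrak{m}\mathfrak{n})^{k-(t-1)} J^{t-1}$, which is a summand of $G_{t-1}$.

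The crux is the inclusion $\subseteq$ of (b), which I would attack in two stages. First, I would verify that every summand of $G_{t-1}$ lies in $\mathfrak{m}^{k-t+1} T$: for $H^{k}$, each generator is a product of $k \geq k-t+1$ elements of $\mathfrak{m}$ (since $H \subseteq \mathfrak{m}$), and for $(\mathfrak{m}\mathfrak{n})^{k-i} J^{i}$ with $i \leq t-1$, the $\mathfrak{m}$-degree is at least $k - i \geq k-t+1$. It then suffices to establish the ideal identity
\[
\mathfrak{m}^{k-t+1} T \cap (\mathfrak{m}\mathfrak{n})^{k-t} J^{t} = \mathfrak{m}^{k-t+1}\mathfrak{n}^{k-t} J^{t}.
\]
This is the main obstacle, and I would resolve it via the direct sum $T = \bigoplus_{\mu} \mu R$ exhibiting $T$ as a free $R$-module with basis the monomials $\mu$ in the $x_{i}$'s. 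Setting $B = \mathfrak{n}^{k-t} J^{t} \subseteq R$, one has $(\mathfrak{m}\mathfrak{n})^{k-t} J^{t} = \mathfrak{m}^{k-t} B$. Writing $z = \sum_{\mu} \mu\, r_{\mu}$ with $r_{\mu} \in R$, membership of $z$ in $\mathfrak{m}^{k-t+1} T$ forces $r_{\mu} = 0$ whenever $\deg \mu \leq k-t$, while membership in $\mathfrak{m}^{k-t} B$ forces $r_{\mu} \in B$ whenever $\deg \mu \geq k-t$. Combining these constraints gives $z \in \mathfrak{m}^{k-t+1} B = \mathfrak{m}^{k-t+1}\mathfrak{n}^{k-t} J^{t}$, as required.
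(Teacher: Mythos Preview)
The paper does not supply its own proof of this lemma; it is quoted verbatim from \cite[Proposition~4.4]{MR3988200} and used as a black box. So there is no in-paper argument to compare against, and the relevant question is simply whether your proof is sound. It is.

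Your part~(a) is a clean expansion-and-absorption argument: writing $F^{k}=\sum_{i,\ell} I^{\ell}(\mathfrak{m}\mathfrak{n})^{k-i-\ell}J^{i}$, the pure $i=0$ terms rebuild $H^{k}$, the pure $\ell=0$ terms give the summands $(\mathfrak{m}\mathfrak{n})^{k-i}J^{i}$, and the key containment $IJ\subseteq\mathfrak{m}^{2}\mathfrak{n}^{2}=(\mathfrak{m}\mathfrak{n})^{2}$ lets you trade each mixed term into one of these two types. The reverse inclusion is immediate since $H,\mathfrak{m}\mathfrak{n},J\subseteq F$.

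For part~(b), your treatment of the inclusion $\supseteq$ is correct; the rewriting $xj_{1}\in\mathfrak{m}\,\mathfrak{n}^{2}=(\mathfrak{m}\mathfrak{n})\mathfrak{n}$ lands the generator in $(\mathfrak{m}\mathfrak{n})^{k-(t-1)}J^{t-1}\subseteq G_{t-1}$ (and for $t=1$ directly in $(\mathfrak{m}\mathfrak{n})^{k}\subseteq H^{k}$). For the inclusion $\subseteq$, your two-step reduction is the efficient route: first $G_{t-1}\subseteq\mathfrak{m}^{k-t+1}T$ by counting $\mathfrak{m}$-degree of each summand, and then the tensor-product identity
\[
\mathfrak{m}^{k-t+1}T\;\cap\;\mathfrak{m}^{k-t}BT\;=\;\mathfrak{m}^{k-t+1}BT,\qquad B=\mathfrak{n}^{k-t}J^{t},
\]
which you verify via the free $R$-module decomposition $T=\bigoplus_{\mu}\mu R$ over the $x$-monomials. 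This last step is really the statement that for an $S$-ideal $A$ and an $R$-ideal $C$ one has $AT\cap CT=A\otimes_{\mathbb{K}}C=ACT$ inside $T=S\otimes_{\mathbb{K}}R$; your basis argument makes this explicit in the case at hand. No gap.
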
  

\begin{Lemma}
    [{\cite[Theorem A4.3]{MR1322960}}]
    \label{Theo2.7} 
    Let $\left(S,\mathfrak{m}\right)$ be a local ring, and let $M$ be a finitely generated $S$-module. We have $H_{\mathfrak{m}}^i(M) = 0$ for $i<\depth(M)$ and for $i > \dim(M)$.
\end{Lemma}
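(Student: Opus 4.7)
The plan is to establish the two vanishing claims separately, drawing on the structural properties of local cohomology recorded in \Cref{prop:2.2} together with a small amount of depth--dimension machinery.

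For the lower bound $H^i_{\mathfrak{m}}(M)=0$ when $i<\depth(M)$, I would induct on $d := \depth(M)$. The base case $d=0$ is vacuous. For the inductive step, pick an $M$-regular element $x \in \mathfrak{m}$, so that $\depth(M/xM) = d-1$, and apply part (c) of \Cref{prop:2.2} to the short exact sequence $0 \to M \xrightarrow{x} M \to M/xM \to 0$ to extract the fragment
$$H^{i-1}_{\mathfrak{m}}(M/xM) \to H^i_{\mathfrak{m}}(M) \xrightarrow{x} H^i_{\mathfrak{m}}(M).$$
For $i<d$ the induction hypothesis forces the left-hand term to vanish, so multiplication by $x$ is injective on $H^i_{\mathfrak{m}}(M)$. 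But every element of $H^i_{\mathfrak{m}}(M)$ is annihilated by some power of $\mathfrak{m}$, hence by some power of $x$; iterating the injective map $\mu_x$ then shows that such an element must already be zero. Thus $H^i_{\mathfrak{m}}(M)=0$, closing the induction.

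For the upper bound $H^i_{\mathfrak{m}}(M)=0$ when $i>\dim(M)$, I would pass to the quotient $\bar{S}:=S/\Ann(M)$, which has Krull dimension $d:=\dim(M)$. Part (b) of \Cref{prop:2.2} lets me compute the local cohomology as that of $M$ viewed as an $\bar{S}$-module with support in $\mathfrak{m}\bar{S}$. Since $\bar{S}$ is a $d$-dimensional Noetherian local ring, its maximal ideal is the radical of an ideal generated by a system of parameters $\bar{x}_1,\dots,\bar{x}_d$ (Krull's height theorem). Lifting to $x_1,\dots,x_d\in\mathfrak{m}$ yields $\sqrt{(x_1,\dots,x_d)+\Ann(M)}=\mathfrak{m}$, so local cohomology supported in $\mathfrak{m}$ coincides with local cohomology supported in $(x_1,\dots,x_d)$. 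Applying \Cref{thm:2.4}, the latter is computed by $\check{\mathcal{C}}^{\bullet}(x_1,\dots,x_d)\otimes M$, a complex concentrated in cohomological degrees $0,\dots,d$; its cohomology therefore vanishes in degrees $i>d$.

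The main obstacle is not really technical but rather a single external input: producing $d$ elements whose radical cuts out the maximal ideal modulo $\Ann(M)$ requires Krull's height theorem, which is not supplied by the preparatory lemmas in the excerpt. Everything else is direct long-exact-sequence bookkeeping together with a length count on the \v{C}ech complex.
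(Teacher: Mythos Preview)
The paper does not prove this lemma at all; it is stated as a citation of \cite[Theorem A4.3]{MR1322960} and used as a black box. Your proof is the standard textbook argument and is correct. One small remark: in the upper-bound step you implicitly use that local cohomology depends only on the radical of the support ideal (to pass from $\mathfrak{m}\bar{S}$ to the parameter ideal $(\bar{x}_1,\dots,\bar{x}_d)$); this is not among the preparatory facts you cite from \Cref{prop:2.2}, but it is immediate from the definition of $\Gamma_{\fraka}$ and is no more of an external input than the Krull height theorem you already flag.
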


Before presenting the main result of this section, we collect some preliminary results.

\begin{Proposition}
    \label{prop2.8}
    Take the assumptions as in \Cref{setting-section-2}. 
    \begin{enumerate}[a]
        \item\label{prop2.8.1} For any integer $0 \leq t < k$, $a_1\left(S/\mathfrak{m}^tI^{k-t}\right)=a_1\left(S/I^{k-t}\right)$.
        \item\label{prop2.8.2} If $\dim(R)>2$ and $\dim(S)>2$, then $a_1\left(\frac{T}{\mathfrak{m}^k\mathfrak{n}^k} \right)=2k-2$.
        \item\label{prop2.8.3} If $\dim(S)>2$, then $a_1\left(S/I\right)=a_2\left(I\right)$.
        \item \label{prop2.8.4} Set $m_{\sigma\delta}\coloneqq \prod_{i \in \sigma} x_{i}\cdot\prod_{j \in \delta}y_{j}$ for $\sigma \subseteq[s]$ and $\delta \subseteq\ [r]$. Let $F=I+J+\mathfrak{m}\mathfrak{n}\subseteq T$. We use $\partial_j$ to denote the differential map in $\frac{T}{F^k}\otimes \check{\mathcal{C}}^{\bullet}\left(x_{1}, \dots, x_{s}, y_1, \dots, y_r \right)$ at the positions $j$ and $j+1$. Let $\partial_j^0$ and $\partial_j^1$ be the restriction of $\partial_j$ on $\bigoplus_{\substack{\abs{\sigma}=j}}\frac{T}{F^k}[m_{\sigma\emptyset}^{-1}]$ and $\bigoplus_{\substack{\abs{\delta}=j}}\frac{T}{F^k}[m_{\emptyset\delta}^{-1}]$ respectively. Then $\partial_j=\partial_j^1\oplus\partial_j^2$ for each integer $j \geq 1$.
    \end{enumerate}
\end{Proposition}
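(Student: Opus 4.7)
The plan is to handle the four parts separately, each via a short exact sequence together with a local-cohomology vanishing drawn from \Cref{prop:2.2} and \Cref{Theo2.7}. For (a), I would consider the sequence $0 \to I^{k-t}/\mathfrak{m}^{t} I^{k-t} \to S/\mathfrak{m}^{t} I^{k-t} \to S/I^{k-t} \to 0$: the leftmost term is annihilated by $\mathfrak{m}^{t}$, so \Cref{prop:2.2}(e) kills its positive local cohomologies, and the long exact sequence yields $H_{\mathfrak{m}}^{1}(S/\mathfrak{m}^{t} I^{k-t}) \cong H_{\mathfrak{m}}^{1}(S/I^{k-t})$ in each degree. For (c), applying the same mechanism to $0 \to I \to S \to S/I \to 0$, and using that $\depth(S) = s > 2$ forces $H_{\mathfrak{m}}^{1}(S) = H_{\mathfrak{m}}^{2}(S) = 0$ by \Cref{Theo2.7}, produces $H_{\mathfrak{m}}^{1}(S/I) \cong H_{\mathfrak{m}}^{2}(I)$ in every degree.

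Part (b) is the most involved. The monomial identity $\mathfrak{m}^{k} \cap \mathfrak{n}^{k} = \mathfrak{m}^{k} \mathfrak{n}^{k}$, immediate from the disjointness of the variable sets of $\mathfrak{m}$ and $\mathfrak{n}$, produces the Mayer--Vietoris-type sequence
$$0 \to T/\mathfrak{m}^{k}\mathfrak{n}^{k} \to T/\mathfrak{m}^{k} \oplus T/\mathfrak{n}^{k} \to T/(\mathfrak{m}^{k} + \mathfrak{n}^{k}) \to 0.$$
Writing $\mathfrak{M} = \mathfrak{m} + \mathfrak{n}$, the module $T/\mathfrak{m}^{k}$ is $\mathfrak{m}$-torsion, so \Cref{prop:2.2}(d) gives $H_{\mathfrak{M}}^{j}(T/\mathfrak{m}^{k}) = H_{\mathfrak{n}}^{j}(T/\mathfrak{m}^{k})$; since $T/\mathfrak{m}^{k} \cong (S/\mathfrak{m}^{k}) \otimes_{\KK} R$ is a free $R$-module, parts (a)--(b) of \Cref{prop:2.2} identify this with $(S/\mathfrak{m}^{k}) \otimes_{\KK} H_{\mathfrak{n}}^{j}(R)$, and Cohen--Macaulayness of $R$ forces it to vanish unless $j = r = \dim R$. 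With $r > 2$, I get $H_{\mathfrak{M}}^{j}(T/\mathfrak{m}^{k}) = 0$ for $j \in \{0,1,2\}$, and symmetrically for $T/\mathfrak{n}^{k}$ from $s > 2$. Meanwhile $T/(\mathfrak{m}^{k} + \mathfrak{n}^{k})$ is finite-dimensional over $\KK$, hence $\mathfrak{M}$-torsion, so \Cref{prop:2.2}(e) kills its positive local cohomology. The long exact sequence therefore collapses to $H_{\mathfrak{M}}^{1}(T/\mathfrak{m}^{k}\mathfrak{n}^{k}) \cong T/(\mathfrak{m}^{k} + \mathfrak{n}^{k})$ as graded modules, and the latter has top degree $(k-1) + (k-1) = 2k-2$.

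Part (d) hinges on a single vanishing: whenever both $\sigma \subseteq [s]$ and $\delta \subseteq [r]$ are nonempty, $m_{\sigma\delta} \in \mathfrak{m}\mathfrak{n} \subseteq F$, so $m_{\sigma\delta}^{k} \in F^{k}$ becomes zero in $T/F^{k}$, whence $(T/F^{k})[m_{\sigma\delta}^{-1}] = 0$. Consequently, at every \v{C}ech degree $j \geq 1$ only the pure-$x$ summands $(T/F^{k})[m_{\sigma\emptyset}^{-1}]$ and pure-$y$ summands $(T/F^{k})[m_{\emptyset\delta}^{-1}]$ survive. The \v{C}ech differential out of a pure-$x$ summand either extends $\sigma$ by another $x$-index (staying pure-$x$) or inserts a $y$-index (landing in a mixed summand that is zero); symmetrically for pure-$y$. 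This delivers the stated decomposition $\partial_{j} = \partial_{j}^{0} \oplus \partial_{j}^{1}$.

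I anticipate the main obstacle will lie in Part (b): three distinct vanishings must be orchestrated to reduce the long exact sequence to a single isomorphism, after which the degree $2k-2$ can be read off the top corner of the finite-dimensional cokernel. The other parts reduce very quickly once the right short exact sequence or nilpotency observation is identified.
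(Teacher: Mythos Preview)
Your arguments for parts (a), (c), and (d) match the paper's essentially verbatim: the same short exact sequences, the same torsion/depth vanishings, and the same observation that mixed localizations $(T/F^k)[m_{\sigma\delta}^{-1}]$ die because $m_{\sigma\delta}\in\mathfrak{m}\mathfrak{n}\subseteq F$.

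Part (b) is where you genuinely diverge. The paper postpones (b) and proves it later via Takayama's formula (\Cref{3.1}) combined with O'Rourke's computation of $\dim_{\KK} H^1_{\mathfrak{m}+\mathfrak{n}}(T/(I+J+\mathfrak{m}\mathfrak{n})^k)_{\bdgamma}$ (\Cref{Theo2.6}), specializing to $I=J=0$ and reading off the degree bound from the non-emptiness of the relevant degree complexes. Your Mayer--Vietoris route, using $\mathfrak{m}^k\cap\mathfrak{n}^k=\mathfrak{m}^k\mathfrak{n}^k$ and the vanishing $H^j_{\mathfrak{M}}(T/\mathfrak{m}^k)=H^j_{\mathfrak{n}}((S/\mathfrak{m}^k)\otimes_{\KK}R)=0$ for $j<r$, is correct and more self-contained: it avoids the combinatorial machinery entirely and yields the stronger statement $H^1_{\mathfrak{M}}(T/\mathfrak{m}^k\mathfrak{n}^k)\cong T/(\mathfrak{m}^k+\mathfrak{n}^k)$ as graded modules, from which $a_1=2k-2$ is immediate. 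The paper's approach, by contrast, fits naturally into the later combinatorial framework of Section~3 and illustrates \Cref{Theo2.6} in the simplest case, but at the cost of a forward reference. Your argument is the cleaner standalone proof.
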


\begin{proof}
    \begin{enumerate}[a]	
        \item When $0\leq t \leq k-1$, the following short exact sequence
            $$
            0\rightarrow \frac{I^{k-t}}{I^{k-t}\mathfrak{m} ^t} \rightarrow \frac{S}{I^{k-t}\mathfrak{m} ^t} \rightarrow \frac{S}{I^{k-t}} \rightarrow 0
            $$
            induces a long exact sequence
            $$
            \cdots\rightarrow H_{\mathfrak{m} }^{1}\left(\frac{I^{k-t}}{I^{k-t}\mathfrak{m} ^t}\right) \rightarrow H_{\mathfrak{m} }^{1}\left(\frac{S}{I^{k-t}\mathfrak{m} ^t}\right) \rightarrow H_{\mathfrak{m} }^{1}\left(\frac{S}{I^{k-t}}\right) \rightarrow H_{\mathfrak{m} }^{2}\left(\frac{I^{k-t}}{I^{k-t}\mathfrak{m} ^t}\right) \rightarrow \cdots.
            $$
            Since $\frac{I^{k-i}}{I^{k-i}\mathfrak{m} ^i}$ is $\mathfrak{m} $-torsion for $1\leq i \leq k$,  we have $H_{\mathfrak{m}}^{1}\left(\frac{I^{k-i}}{I^{k-i}\mathfrak{m} ^i}\right)=0=H_{\mathfrak{m} }^{2}\left(\frac{I^{k-i}}{I^{k-i}\mathfrak{m} ^i}\right)$ by \Cref{prop:2.2}\ref{2.2.6}. Consequently
            $$
            H_{\mathfrak{m} }^{1}\left(\frac{S}{I^{k-t}\mathfrak{m} ^t}\right) \cong H_{\mathfrak{m} }^{1}\left(\frac{S}{I^{k-t}}\right),
            $$
            and hence $a_1\left(S/ \mathfrak{m}^tI^{k-t}\right)=a_1\left(S/ I^{k-t}\right)$.
            
        \item We will prove this after \Cref{Theo2.6}.
            
        \item The short exact sequence
            $$
            0 \longrightarrow I \longrightarrow S \longrightarrow S/I \longrightarrow 0
            $$
            yields a long exact sequence
            $$
            \cdots \longrightarrow H_{\mathfrak{m}}^1\left(S\right) \longrightarrow H_{\mathfrak{m}}^1\left(S/I\right) \longrightarrow H_{\mathfrak{m}}^2\left(I\right) \longrightarrow H_{\mathfrak{m}}^2\left(S\right) \longrightarrow \cdots .
            $$
            Applying a graded version of \Cref{Theo2.7}, we get $H_{\mathfrak{m}}^1\left(S\right)=H_{\mathfrak{m}}^2\left(S\right)=0$. As a result, $a_1(S/I)=a_2(I)$.
        \item When $j \geq 1$, we have 
            $$
            \frac{T}{F^k}\otimes \check{\mathcal{C}}^{j}\left(x_{1}, \dots, x_{s}, y_1, \dots, y_r \right)=\bigoplus_{\abs{\sigma\cup\delta}=j}\frac{T}{(I+J+\mathfrak{m}\mathfrak{n})^k}[m_{\sigma\delta}^{-1}].
            $$ 
            If both $\sigma$ and $\delta$ are nonempty, then 
            \begin{equation}
                \frac{T}{(I+J+\mathfrak{m}\mathfrak{n})^k}[m_{\sigma\delta}^{-1}]=0.
                \label{eqn:1} 
            \end{equation}
            Therefore, the module $\bigoplus_{\abs{\sigma\cup\delta}=j}\frac{T}{(I+J+\mathfrak{m}\mathfrak{n})^k}[m_{\sigma\delta}^{-1}]$ is simply 
            \[
                \left(\bigoplus_{\substack{\abs{\sigma}=j}}\frac{T}{F^k}[m_{\sigma\emptyset}^{-1}]\right)\oplus\left(\bigoplus_{\substack{\abs{\delta}=j}}\frac{T}{F^k}[m_{\emptyset\delta}^{-1}]\right).
            \]

            Since $\mathfrak{m} T[m_{\sigma\emptyset}^{-1}]=T[m_{\sigma\emptyset}^{-1}]$ and $J+\mathfrak{n}=\mathfrak{n}$, we have
            \begin{align*}
                F^k T[m_{\sigma\emptyset}^{-1}] &= (I+J+\mathfrak{m}\mathfrak{n})^kT[m_{\sigma\emptyset}^{-1}]
               =(I+\mathfrak{n})^kT[m_{\sigma\emptyset}^{-1}]. 
            \end{align*}
            This means that
            \begin{equation}\label{eqn:2}
                \frac{T}{F^k}\left[m_{\sigma\emptyset}^{-1}\right]=\frac{T}{(I+\mathfrak{n})^k}\left[m_{\sigma\emptyset}^{-1}\right]. 
            \end{equation}
            Likewise,
            $$
            \frac{T}{F^k}\left[m_{\emptyset\delta}^{-1}\right]=\frac{T}{(J+\mathfrak{m})^k}\left[m_{\emptyset\delta}^{-1}\right]. 
            $$
            So the \v{C}ech complex at the positions $j$ and $j+1$ can be written as
            \begin{align*}
                \cdots&\rightarrow \left(\bigoplus_{\substack{\abs{\sigma}=j}}\frac{T}{(I+\mathfrak{n})^k}[m_{\sigma\emptyset}^{-1}]\right)\oplus\left(\bigoplus_{\substack{\abs{\delta}=j}}\frac{T}{(J+\mathfrak{m})^k}[m_{\emptyset\delta}^{-1}]\right)\\
                &\stackrel{\partial_{j}}{\longrightarrow} \left(\bigoplus_{\substack{\abs{\sigma}=j+1}}\frac{T}{(I+\mathfrak{n})^k}[m_{\sigma\emptyset}^{-1}]\right)\oplus\left(\bigoplus_{\substack{\abs{\delta}=j+1}}\frac{T}{(J+\mathfrak{m})^k}[m_{\emptyset\delta}^{-1}]\right) \rightarrow \cdots.
            \end{align*}
            Furthermore, when $j>1$,  $\partial_j\left(\frac{T}{(I+\mathfrak{n})^k}[m_{\sigma\emptyset}^{-1}]\right)$ is a subset of
            \begin{align*}
                &\left(\bigoplus_{\substack{\abs{\sigma}=j,\\i\in [s]\backslash \sigma}  }\frac{T}{F^k}\left[m_{\sigma\emptyset}^{-1}x_i^{-1}\right] \right)\oplus \left(\bigoplus_{\substack{\abs{\delta}=j,\\i\in [r]\backslash \delta}}\frac{T}{F^k}\left[m_{\emptyset\delta}^{-1}y_i^{-1}\right]\right)
                =\bigoplus_{\substack{\abs{\sigma}=j+1}}\frac{T}{(I+\mathfrak{n})^k}[m_{\sigma\emptyset}^{-1}].
            \end{align*}
            by \eqref{eqn:1} and \eqref{eqn:2}.
            Then 
            \begin{equation*}
            \im\partial_j^1 \subseteq\bigoplus_{\substack{\abs{\sigma}=j+1}}\frac{T}{(I+\mathfrak{n})^k}[m_{\sigma\emptyset}^{-1}].
            \end{equation*}
            Likewise,
            $$
            \im\partial_j^2 \subseteq\bigoplus_{\substack{\abs{\delta}=j+1}}\frac{T}{(J+\mathfrak{m})^k}[m_{\emptyset\delta}^{-1}].
            $$
            Thus $\partial_j=\partial_j^1\oplus\partial_j^2$ for each integer $j \geq 1$. 
            \qedhere
    \end{enumerate}
\end{proof}

Now, we are ready to present the first main result of this paper.

\begin{Theorem} 
    \label{Theo}
    Take the assumptions as in \Cref{setting-section-2}.  \begin{enumerate}[a]
        \item If $j \geq 2$, then 
        \[
        a_j\left(T/ F^{k}\right)=\max\Set{a_j\left(S/ I^{k-t}\right)+t, a_j\left(R/ J^{k-t}\right)+t: 0 \leq t \leq k-1}.
        \]
        \item \label{Theo(2)} Suppose in addition that $\dim(S)>2$, $\dim(R)>2$, $I \subseteq\mathfrak{m}^2$, $\sqrt{I}\neq \mathfrak{m}$, $J \subseteq \mathfrak{n}^2$ and $\sqrt{J}\neq \mathfrak{n}$. Then,
            $$
            a_1\left(T/ F^{k}\right)=\max\Set{2k-2,a_1\left(S/ I^{k-t}\right)+t, a_1\left(R/ J^{k-t}\right)+t :0 \leq t \leq k-1 }.
            $$
    \end{enumerate}
\end{Theorem}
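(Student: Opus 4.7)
The strategy is a Čech-complex / bidegree analysis. By Proposition~\ref{prop2.8}\ref{prop2.8.4}, for every $j\ge 1$ the Čech complex $(T/F^k)\otimes\check{\mathcal{C}}^\bullet(x_1,\dots,x_s,y_1,\dots,y_r)$ splits at position $j$ into an $x$-part and a $y$-part, and the differentials between positions $\ge 1$ respect this splitting. Equation~\eqref{eqn:2} identifies the $x$-part with the $x$-only Čech complex of $T/(I+\mathfrak{n})^k$; its cohomology is $H^j_\mathfrak{m}(T/(I+\mathfrak{n})^k)$, which equals $H^j_{\mathfrak{m}+\mathfrak{n}}(T/(I+\mathfrak{n})^k)$ via Lemma~\ref{prop:2.2}\ref{2.2.5} since $T/(I+\mathfrak{n})^k$ is $\mathfrak{n}$-torsion. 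Thus for $j\ge 2$,
\[
H^j_{\mathfrak{m}+\mathfrak{n}}(T/F^k)\;\cong\;H^j_\mathfrak{m}\bigl(T/(I+\mathfrak{n})^k\bigr)\;\oplus\;H^j_\mathfrak{n}\bigl(T/(J+\mathfrak{m})^k\bigr).
\]
To compute $a_j(T/(I+\mathfrak{n})^k)$ I would use the $\ZZ^2$-bigrading on $T=S\otimes_\KK R$. Writing $(I+\mathfrak{n})^k=\sum_{a+b=k}I^a\otimes\mathfrak{n}^b$ and inspecting the $(p,q)$-bidegree, the ideal equals $I^{k-q}_p\otimes R_q$ for $0\le q\le k-1$ and all of $T_{(p,q)}$ for $q\ge k$. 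Hence, as $\ZZ$-graded $S$-modules (the $S$-action preserving the $y$-degree),
\[
T/(I+\mathfrak{n})^k\;\cong\;\bigoplus_{t=0}^{k-1}(S/I^{k-t})\otimes_\KK R_t,
\]
with $R_t$ placed in total degree $t$. Since $H^j_\mathfrak{m}$ commutes with direct sums and with $\KK$-linear base change, $a_j(T/(I+\mathfrak{n})^k)=\max_{0\le t\le k-1}(a_j(S/I^{k-t})+t)$, and symmetrically for $T/(J+\mathfrak{m})^k$; combining with the splitting above proves~(a).

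For part~(b) the Čech complex at position $0$ does \emph{not} decompose, so I would study the natural map $T/F^k\to T/(I+\mathfrak{n})^k\oplus T/(J+\mathfrak{m})^k$ through the two short exact sequences
\[
0\to T/\bigl((I+\mathfrak{n})^k\cap(J+\mathfrak{m})^k\bigr)\to T/(I+\mathfrak{n})^k\oplus T/(J+\mathfrak{m})^k\to N\to 0,
\]
\[
0\to K\to T/F^k\to T/\bigl((I+\mathfrak{n})^k\cap(J+\mathfrak{m})^k\bigr)\to 0,
\]
where $N:=T/\bigl((I+\mathfrak{n})^k+(J+\mathfrak{m})^k\bigr)$ and $K:=\bigl((I+\mathfrak{n})^k\cap(J+\mathfrak{m})^k\bigr)/F^k$. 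The same bidegree analysis yields $N_{(p,q)}=(S_p/I^{k-q}_p)\otimes(R_q/J^{k-p}_q)$ for $0\le p,q\le k-1$ and zero otherwise, so $N$ is $(\mathfrak{m}+\mathfrak{n})$-torsion with $a_0(N)=2k-2$; the hypotheses $\sqrt I\ne\mathfrak{m}$ and $\sqrt J\ne\mathfrak{n}$ guarantee $(S/I)_{k-1}\ne 0$ and $(R/J)_{k-1}\ne 0$, so the maximum is attained. The assumptions $\dim S,\dim R>2$ force $H^j_\mathfrak{m}(S)=H^j_\mathfrak{n}(R)=0$ for $j\le 2$ by Lemma~\ref{Theo2.7}, which eliminates spurious terms, while $I\subseteq\mathfrak{m}^2$ and $J\subseteq\mathfrak{n}^2$ (invoked through Lemma~\ref{prop:2.5}) control the finer structure of $K$. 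Assembling the long exact sequences in local cohomology, while using Proposition~\ref{prop2.8}\ref{prop2.8.1},\ref{prop2.8.3} to convert between $a_1$ and $a_2$ where needed, produces the formula in~(b); the special case $I=J=0$ recovers Proposition~\ref{prop2.8}\ref{prop2.8.2}.

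The principal obstacle is part~(b): handling the kernel module $K$, which is in general nonzero (e.g.\ $K_{(1,2)}\ne 0$ already for $k=2$ under the standing hypotheses), and tracking how the $2k-2$ contribution from $N$ enters $H^1_{\mathfrak{m}+\mathfrak{n}}(T/F^k)$ via the connecting homomorphisms. All five additional hypotheses of~(b) intervene at this step.
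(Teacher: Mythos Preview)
Your treatment of part~(a) is correct and matches the paper's argument essentially line for line: the \v{C}ech-complex splitting from Proposition~\ref{prop2.8}\ref{prop2.8.4} together with the bigraded $S$-module decomposition of $T/(I+\mathfrak{n})^k$ is exactly what the paper does.

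For part~(b) your route is genuinely different from the paper's, and the place you flag as the ``principal obstacle'' is in fact not one. The module $K=\bigl((I+\mathfrak{n})^k\cap(J+\mathfrak{m})^k\bigr)/F^k$ is $(\mathfrak{m}+\mathfrak{n})$-torsion: inverting any $x_i$ turns $(J+\mathfrak{m})^k$ into the unit ideal and, by the very identity~\eqref{eqn:2} you already cited, turns $F^k$ into $(I+\mathfrak{n})^k$; hence $K[x_i^{-1}]=0$, and symmetrically $K[y_j^{-1}]=0$. Thus $H^i_{\mathfrak{m}+\mathfrak{n}}(K)=0$ for $i\ge 1$, your second short exact sequence gives $H^1_{\mathfrak{m}+\mathfrak{n}}(T/F^k)\cong H^1_{\mathfrak{m}+\mathfrak{n}}\bigl(T/((I+\mathfrak{n})^k\cap(J+\mathfrak{m})^k)\bigr)$, and the first sequence then yields both inequalities at once. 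The $2k-2$ contribution arises from the cokernel of $H^0(\oplus)\to N$ in bidegree $(k-1,k-1)$; the hypotheses $\sqrt I\ne\mathfrak{m}$ and $\sqrt J\ne\mathfrak{n}$ guarantee this cokernel is nonzero there, since they force $S/(I{:}\mathfrak{m}^\infty)$ and $R/(J{:}\mathfrak{n}^\infty)$ to have positive Krull dimension and hence nonzero graded pieces in every degree. Your appeal to Lemma~\ref{prop:2.5} for controlling $K$ is therefore misdirected: that lemma concerns a different filtration and is not what is needed.

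By contrast, the paper never introduces $K$ or $N$. It establishes the two lower bounds separately---one via the partial \v{C}ech splitting (your $\partial_1=\partial_1^1\oplus\partial_1^2$), the other by exhibiting an explicit degree-$(2k-2)$ cocycle---and obtains the upper bound through the Nguyen--Vu filtration $G_t=H^k+\sum_{i\le t}(\mathfrak{m}\mathfrak{n})^{k-i}J^i$ of Lemma~\ref{prop:2.5}, whose successive quotients are $\mathfrak{m}$-torsion. It is precisely this upper-bound step that consumes the extra hypotheses $I\subseteq\mathfrak{m}^2$, $J\subseteq\mathfrak{n}^2$ and (via Proposition~\ref{prop2.8}\ref{prop2.8.2},\ref{prop2.8.3}) $\dim S,\dim R>2$. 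Your Mayer--Vietoris approach, once the torsion property of $K$ is noted, appears to bypass the filtration and with it several of those auxiliary hypotheses---so it is potentially sharper as well as different. As written, however, your proposal for~(b) is incomplete: the sentence ``Assembling the long exact sequences\dots'' does not actually carry out the assembly, and the self-identified obstacle was left standing.
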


\begin{proof} 
    Suppose that $j \geq 1$. Then \Cref{thm:2.4} says that 
    $$
    H_{\mathfrak{m}+\mathfrak{n}}^{j}(T/ F^k)=H^{j}\left((T/ F^k) \otimes \check{\mathcal{C}}^{\bullet}\left(x_{1}, \dots, x_{s}, y_1, \dots, y_r\right)\right).
    $$
    Set $m_{\sigma\delta}=\prod_{i \in \sigma} x_{i}\cdot\prod_{i \in \delta}y_{i}\in T$ for $\sigma \subseteq[s]$ and $\delta \subseteq\ [r]$. We have 
    $$
    \frac{T}{F^k}\otimes \check{\mathcal{C}}^{j}\left(x_{1}, \dots, x_{s}, y_1, \dots, y_r \right)=\bigoplus_{\abs{\sigma\cup\delta}=j}\frac{T}{(I+J+\mathfrak{m}\mathfrak{n})^k}[m_{\sigma\delta}^{-1}].
    $$  
    Notice that both $S$ and $R$ have multigraded structures respetively. Hence $T$ will have inherited multigrading, bigrading and standard grading. We will use this fact freely in the following proof. 
    \begin{enumerate}[a]
        \item When $j \geq 2$, we have the following bigraded decomposition via \Cref{prop:2.2}\ref{2.2.3}, \Cref{thm:2.4} and \Cref{prop2.8}\ref{prop2.8.4}:
            \begin{align*}
                H_{\mathfrak{m}+\mathfrak{n}}^{j}\left(\frac{T}{(I+J+\mathfrak{m}\mathfrak{n})^k}\right)
                &=
                \frac{\ker\left(\partial_j\right)}{\im\left(\partial_{j-1}\right)}
                =
                \frac{\ker\left(\partial_j^1\oplus\partial_j^2\right)}{\im\left(\partial_{j-1}^1\oplus\partial_{j-1}^2\right)}\\
                &\cong\frac{\ker\left(\partial_j^1\right)}{\im\left(\partial_{j-1}^1\right)}\oplus\frac{\ker\left(\partial_j^2\right)}{\im\left(\partial_{j-1}^2\right)}\\
                &\cong H_{\mathfrak{m} T}^j\left(\frac{T}{(I+\mathfrak{n})^k}\right) \oplus H_{\mathfrak{n} T}^j\left(\frac{T}{(J+\mathfrak{m})^k}\right)\\ 
                &=H_{\mathfrak{m}}^j\left(\frac{T}{(I+\mathfrak{n})^k}\right) \oplus H_{\mathfrak{n} }^j\left(\frac{T}{(J+\mathfrak{m})^k}\right).
            \end{align*}
            This implies that
            $$
            a_j\left({T}/{(I+J+\mathfrak{m}\mathfrak{n})^k}\right)=\max\left\{a_j\left({T}/{(I+\mathfrak{n})^k}\right),a_j\left({T}/{(J+\mathfrak{m})^k}\right)\right\}.
            $$ 
            As $S$-modules, we have the following bigraded isomorphism:
            \begin{equation*}
                \frac{T}{(I+\mathfrak{n})^k}\cong\bigoplus_{\bdbeta \in \mathbb{N}^r,\\\abs{\bdbeta}<k}\frac{S}{I^{k-\abs{\bdbeta}}}(0,-\abs{\bdbeta}).
            \end{equation*} 
            Then the canonical epimorphism $T \rightarrow S$ induces an isomorphism
            \[
                H_{\mathfrak{m}}^j\left(\frac{T}{(I+\mathfrak{n})^k}\right) \cong H_{\mathfrak{m}}^j\left(\bigoplus_{\bdbeta \in \mathbb{N}^r,\abs{\bdbeta}<k}\frac{S}{I^{k-\abs{\bdbeta}}}(0,-\abs{\bdbeta})\right)
                \cong \bigoplus_{\bdbeta \in \mathbb{N}^r,\abs{\bdbeta}<k}H_{\mathfrak{m}}^j\left(\frac{S}{I^{k-\abs{\bdbeta}}}\right)(0,-\abs{\bdbeta}),
            \]
            via \Cref{prop:2.2} \ref{2.2.2}. Hence 
            $$
            a_j\left({T}/{(I+\mathfrak{n})^k}\right)=\max\left\{a_j\left({S}/{I^{k-t}}\right)+t: 0 \leq t \leq k-1\right\}. 
            $$
            Likewise,
            $$
            a_j\left({T}/{(J+\mathfrak{m})^k}\right)=\max\left\{a_j\left({R}/{J^{k-t}}\right)+t: 0 \leq t \leq k-1\right\}.
            $$
            Therefore, when $j \geq 2$, we arrive at the conclusion that
            $$
            a_j\left(T/ F^{k}\right)=\max\big\{a_j\left(S/ I^{k-t}\right)+t, a_j\left(R/ J^{k-t}\right)+t: 0 \leq t \leq k-1 \big\}. 
            $$

        \item Now we consider the case with $j=1$. The proof will be divided into three steps.  
                
        \medskip
                
        \noindent\textbf{Claim 1:} $a_1\left(T/ F^{k}\right)\geq\max\left\{a_1\left(S/ I^{k-t}\right)+t, a_1\left(R/ J^{k-t}\right)+t :0 \leq t \leq k-1 \right\}$. 
        
        \medskip
            
        Since $\partial_1=\partial_1^1\oplus \partial_1^2$ by \Cref{prop2.8}\ref{prop2.8.4}, one has $\ker\partial_1=\ker\partial_1^1\oplus \ker\partial_1^2$. 
        Let $\partial_0^1$ be the composition of $\partial_0$ with the projection map from $                \bigoplus_{\abs{\sigma\cup\delta}=1}\frac{T}{(I+J+\mathfrak{m}\mathfrak{n})^k}[m_{\sigma\delta}^{-1}]$ to its direct summand $\bigoplus_{\substack{\abs{\sigma}=1}}\frac{T}{(I+J+\mathfrak{m}\mathfrak{n})^k}[m_{\sigma\emptyset}^{-1}])$, and similarly define $\partial_0^2$. It is clear that $\im(\partial_0)\subseteq \im(\partial_0^1) \oplus \im(\partial_0^2)$. Therefore, 
        \[
             \frac{\ker(\partial_1^1)\oplus\ker(\partial_1^2)}{\im(\partial_0^1)\oplus\im(\partial_0^2)} \cong \frac{\ker(\partial_1^1)}{\im(\partial_0^1)}\oplus\frac{\ker(\partial_1^2)}{\im(\partial_0^2)} 
        \]
        is an epimorphic image of $H_{\mathfrak{m}+\frakn}^1(T/F^k) \cong\frac{\ker(\partial_1)}{\im(\partial_0)}$, which in turn implies that
        \[
            a_1(T/ F^{k})
            \geq \max\Set{ l\in \ZZ: \left(\frac{\ker(\partial_1^1)}{\im(\partial_0^1)}\right)_l\neq 0 \text{ or }\left(\frac{\ker(\partial_1^2)}{\im(\partial_0^2)}\right)_l\neq 0 }. 
        \]
        Notice that
        \begin{align*}
            \max\Set{ l\in\ZZ: \left(\frac{\ker(\partial_1^1)}{\im(\partial_0^1)}\right)_l\neq 0}&=\max\Set{ l\in\ZZ: H_{\mathfrak{m}}^j\left(\frac{T}{(I+\mathfrak{n})^k}\right)_l\neq 0}\\
            &=\max\Set{ l\in\ZZ: \bigoplus_{\bdbeta \in \mathbb{N}^r,\abs{\bdbeta}<k}H_{\mathfrak{m}}^1\left(\frac{S}{I^{k-\abs{\bdbeta}}}(0,-\abs{\bdbeta})\right)_l\neq 0}\\
            &=\max\left\{a_1\left(S/ I^{k-t}\right)+t:0 \leq t \leq k-1 \right\}.
        \end{align*}
        Similarly, one has
        \begin{align*}
            \max\Set{l\in\ZZ: \left(\frac{\ker\partial_1^2}{\im\partial_0^2}\right)_l\neq 0 }=\max\Set{a_1\left(R/ J^{k-t}\right)+t:0 \leq t \leq k-1 }. 
        \end{align*}
        Thus, $a_1\left(T/ F^{k}\right)\geq\max\left\{a_1\left(S/ I^{k-t}\right)+t, a_1\left(R/ J^{k-t}\right)+t :0 \leq t \leq k-1 \right\}$, establishing the first claim.

        \medskip
                
        \noindent\textbf{Claim 2:} $a_1\left(T/ F^{k}\right)\geq 2k-2$.
        
        \medskip
        
        It is sufficient to find a bigraded element $u\in \ker(\partial_1)$ such that  its total degree $\deg(u)=2k-2$ and  $u\notin \im(\partial_0)$. For any $v\in T$, let $[v]$, $[v]_{x_i}$ and $[v]_{y_i}$ be the equivalence classes of $v$ in $\frac{T}{F^k}$, $\frac{T}{F^k}[x_i^{-1}]$ and $\frac{T}{F^k}[y_i^{-1}]$ respectively. 
        
        Suppose that $f \in S$ and $g \in R$ with $\deg(f)=\deg(g)=k-1$, then
        \begin{align*}
            [fg]_{x_i} \neq [0]_{x_i} \Longleftrightarrow x_i^lfg \notin \left(I+\mathfrak{n}\right)^k \text{ for } l\geq 0
        \end{align*}
        by the equality \eqref{eqn:2}.
        Since $\deg(g)=k-1$ and $g\in R$, we have $g \notin \mathfrak{n}^k$ and $g \in \mathfrak{n}^p$ for any $1\leq p \leq k-1$. Meanwhile, it is clear that $I^k \subseteq I^{k-1} \subseteq \cdots \subseteq I^2 \subseteq\ I$. Therefore, the above equivalent statements can be further simplified into saying $x_i^lf \notin I \text{ for } l\geq 0$, i.e., $f\notin I:(x_i)^\infty$.
        Consequently, if $[fg]_{x_i} \neq [0]_{x_i}$ for $1 \leq i \leq s$, then $f \notin I:\mathfrak{m}^{\infty}$. Likewise, if $[fg]_{y_i}\neq 0$ for $1\leq i \leq r$, then $g \notin J:\mathfrak{n}^{\infty}$. 
        
        As $\sqrt{I}\ne \mathfrak{m}$, we can find some homogeneous element $f\in S$ of degree $k-1$ such that $f\notin I:\mathfrak{m}^\infty$. Similarly, we can find some homogeneous element $g\in R$ of degree $k-1$ such that $g\notin J:\mathfrak{n}^\infty$. We will verify that $u=\left(\bigoplus_{i=1}^s[fg]_{x_i}\right)\oplus\left(\bigoplus_{i=1}^r[0]_{y_i}\right)$ satisfies the expectation.
        
        To see this, notice first that $u \in \ker(\partial_1)$ and $u$ is bi-homogeneous of degree $(k-1,k-1)$. Consequently, the total degree $\deg(u)=2(k-1)$. Thus, it remains to show that $u \notin \im(\partial_0)$. Suppose for contradiction that there exists an element $h\in T$ such that $\partial_0([h])=\left(\bigoplus_{i=1}^s[fg]_{x_i}\right)\oplus\left(\bigoplus_{i=1}^r[0]_{y_i}\right)$. Without loss of generality, we may assume that $h$ is homogeneous of bidegree $(k-1,k-1)$. Whence, $[h-fg]_{x_i}=[0]_{x_i}$ and $[h]_{y_j}=[0]_{y_j}$ for each $1 \leq i \leq s$ and $1 \leq j \leq r$. Notice that 
        \begin{align*}
            [h-fg]_{x_i}=[0]_{x_i}&\Longleftrightarrow h-fg \in \left(I+\mathfrak{n}\right)^k\colon (x_i)^\infty
        \end{align*}
        for $1\le i\le s$.
        Consequently, we have 
        \[
        h-fg\in (I+\mathfrak{n})^k\colon \mathfrak{m}^\infty \subseteq(I+\mathfrak{n}^k)\colon \mathfrak{m}^\infty.
        \]
        Since the partial degree $\deg_{\bdy}(h-fg)=k-1$, it is clear that $h-fg\notin \mathfrak{n}^k:\mathfrak{m}^{\infty}$ unless $h-fg=0$. So by bigrading, $h-fg \in I:\mathfrak{m}^\infty$. Likewise, we will have $h \in J:\mathfrak{n}^{\infty}$. As a result, $fg=h-(h-fg) \in \left(I:\mathfrak{m}^{\infty}\right)+\left(J:\mathfrak{n}^{\infty}\right)$. Then, again, by bigrading, we will have $f\in I:\mathfrak{m}^{\infty}$ or $g \in J:\mathfrak{n}^{\infty}$, which is a contradiction. And this completes our proof for the second claim.

        \medskip
        
        So far, we have proved that 
        \begin{equation}
            a_1\left(T/F^k\right)\geq \max\left\{2k-2,a_1\left(S/ I^{k-t}\right)+t, a_1\left(R/ J^{k-t}\right)+t :0 \leq t \leq k-1 \right\}.
            \label{4}
        \end{equation} 
        
       \medskip
                
        \noindent\textbf{Claim 3:}  The converse direction of the inequality \eqref{4} also holds.
        
        \medskip       
        
       Let $H=I+\mathfrak{m}\mathfrak{n}$ and $G_t= H^k+\sum_{i=1}^{t}(\mathfrak{m}\mathfrak{n})^{k-i}J^i$ for $0\le t\le k$. Since 
       \[
       G_{t-1} \cap(\mathfrak{m} \mathfrak{n})^{k-t} J^{t}=\mathfrak{m}^{k-t+1} \mathfrak{n}^{k-t} J^{t} 
       \]
       for $1 \leq t \leq k$ by \Cref{prop:2.5}, the following short exact sequence arises
        $$
        0 \rightarrow \frac{(\mathfrak{m}\mathfrak{n})^{k-t}J^t}{\mathfrak{m}^{k-t+1}\mathfrak{n}^{k-t}J^t} \rightarrow \frac{T}{G_{t-1}}\rightarrow \frac{T}{G_t} \rightarrow 0,
        $$ 
        which induces a long exact sequence 
        $$
        \cdots \rightarrow H_{\mathfrak{m}+\mathfrak{n}}^1\left(\frac{T}{G_{t-1}}\right)\rightarrow H_{\mathfrak{m}+\mathfrak{n}}^1\left(\frac{T}{G_{t}}\right) \rightarrow
        H_{\mathfrak{m}+\mathfrak{n}}^2\left(\frac{(\mathfrak{m}\mathfrak{n})^{k-t}J^t}{\mathfrak{m}^{k-t+1}\mathfrak{n}^{k-t}J^t}\right)\rightarrow \cdots 
        $$
        by \Cref{prop:2.2}\ref{2.2.4}. 
        Since $\frac{(\mathfrak{m}\mathfrak{n})^{k-t}J^t}{\mathfrak{m}^{k-t+1}\mathfrak{n}^{k-t}J^t}$ is an $\mathfrak{m} T$-torsion $T$-module, according to \Cref{prop:2.2} \ref{2.2.3} and \ref{2.2.5} we have
        \begin{align*}
        H_{(\mathfrak{m}+\mathfrak{n})T}^2\left(\frac{(\mathfrak{m}\mathfrak{n})^{k-t}J^t}{\mathfrak{m}^{k-t+1}\mathfrak{n}^{k-t}J^t}\right)&\cong H_{\mathfrak{n} T}^2\left(\frac{(\mathfrak{m}\mathfrak{n})^{k-t}J^t}{\mathfrak{m}^{k-t+1}\mathfrak{n}^{k-t}J^t}\right)
        = H_{\mathfrak{n}}^2\left(\frac{(\mathfrak{m}\mathfrak{n})^{k-t}J^t}{\mathfrak{m}^{k-t+1}\mathfrak{n}^{k-t}J^t}\right) \\
        &\cong H_{\mathfrak{n}}^2\left(\frac{\mathfrak{m}^{k-t}}{\mathfrak{m}^{k-t+1}}\otimes \mathfrak{n}^{k-t}J^t\right)
        \cong \frac{\mathfrak{m}^{k-t}}{\mathfrak{m}^{k-t+1}}\otimes H_{\mathfrak{n}}^2\left( \mathfrak{n}^{k-t}J^t\right).
        \end{align*}
        Hence,
        \begin{align*}
            a_1\left(T/ G_t\right) &\leq \max\Set{a_1\left(T/ G_{t-1}\right),a_2\left({(\mathfrak{m}\mathfrak{n})^{k-t}J^t}/{\mathfrak{m}^{k-t+1}\mathfrak{n}^{k-t}J^t}\right)}\\
            &=\max\big\{a_1\left(T/ G_{t-1}\right),a_2\left(\mathfrak{n}^{k-t}J^t\right)+k-t \big\}\\
            &=\max\big\{a_1\left(T/ G_{t-1}\right),a_1\left(R/J^t\right)+k-t \big\}.
        \end{align*}
        The last equivalent holds via \Cref{prop2.8} \ref{prop2.8.1} and \ref{prop2.8.3}.
        Thus we can conclude that
        \begin{align*}
            a_1\left(T/(I+J+\mathfrak{m}\mathfrak{n})^k\right)&=a_1\left(T/ G_k\right)\\
            &\leq\max\big\{a_1\left(T/ G_0\right),a_1\left(R/J^t\right)+k-t:1\leq t\leq k \big\}\\
            &=\max\big\{a_1\left(T/ H^k\right),a_1\left(R/J^t\right)+k-t:1\leq t\leq k \big\}.
        \end{align*}
        Notice that $H=I+\mathfrak{m}\mathfrak{n}$ can also be viewed as the fiber product of $I\subseteq S$ and $(0)\subseteq R$. With a similar argument, we can get
        \begin{align*}
            a_1\left(T/ H^k\right)&\leq\max\big\{a_1\left(T/\mathfrak{m}^k\mathfrak{n}^k\right),a_1\left(S/I^t\right)+k-t:1 \leq t \leq k \big\}\\
            &=\max\Set{2k-2,a_1\left(S/I^t\right)+k-t:1 \leq t \leq k }
        \end{align*}
        by \Cref{prop2.8} \ref{prop2.8.2}.
        These arguments altogether yield
        \begin{equation}\label{5}
            a_1\left(T/F^k\right) \leq\max\left\{2k-2,a_1\left(S/ I^{k-t}\right)+t, a_1\left(R/ J^{k-t}\right)+t :0 \leq t \leq k-1 \right\},
        \end{equation}
        which establishes the third claim. Now, combing the inequalities \eqref{4} with \eqref{5}, we complete the proof.\qedhere
    \end{enumerate}
\end{proof} 

\begin{Remark}
    In \Cref{Theo} \ref{Theo(2)}, if $T=S[y]$ and $J=0$, then the conditions $\dim(S)>2$ and $I\subseteq\mathfrak{m}^2$ can be removed. The proof is only slightly different and will be omitted here.
\end{Remark}  

\section{Top dimensional $a_i$-invariants of $S/I_{\Delta}^{(n)}$}

Let $\Delta$ be a $k$-dimensional simplicial complex over $[s]\coloneqq\{1,2,\dots,s\}$ for some positive integers $k$ and $s$. Suppose that $S=\KK[x_1,\dots,x_s]$ is a polynomial ring over a field $\KK$ and $I_{\Delta}\subseteq S$ is the Stanley-Reisner ideal associated to $\Delta$. The main task of this section is to investigate the $a_{k+1}$-invariant associated to its power $I_\Delta^n$  and its symbolic power $I_\Delta^{(n)}$. The case when $k=1$ has already been considered in {\cite{arXiv:1808.07266}}. There, it was shown that $a_2(S/ I_{\Delta}^{(n)})=a_{2}(S/ I_{\Delta}^{n})$ when $\Delta$ has no isolated vertex. We will generalize this result here by showing that $a_{k+1}(S/ I_{\Delta}^{(n)})=a_{k+1}(S/ I_{\Delta}^{n})$ for any $k$-dimensional simplicial complex $\Delta$. After that, we will characterize when $a_{k+1}(S/ I_{\Delta}^{(n)})$ is maximal. 

Let us start with reviewing some basic notions.
Recall that $\Delta$ is a \emph{simplicial complex} on $[s]$ if $\Delta$ is a collection of subsets of $[s]$ such that if $F \in \Delta $ and $F^{\prime} \subseteq F$ then $F^{\prime} \in \Delta$. Each element $F\in \Delta$ is called a \emph{face} of $\Delta$. The \emph{dimension} of $F$ is defined to be $\dim(F)\coloneqq\abs{F}-1$ and the \emph{dimension} of $\Delta$ is defined to be $\dim(\Delta)\coloneqq \max\Set{\dim(F):F \in \Delta}$. A \emph{facet} is a maximal face of $\Delta$ with respect to inclusion. We will use $\mathcal{F}(\Delta)$ to denote the set of facets of $\Delta$. Meanwhile, a \emph{non-face} of $\Delta$ is a subset $F$ of $[s]$ with $F \notin \Delta$. We will use $\mathcal{N}(\Delta)$ to denote the set of minimal non-faces of $\Delta$.

For any subset $F$ of $[s]$, we set
$$
\bdx_F\coloneqq\prod_{i \in F}x_i\in S.
$$
The \emph{Stanley-Reisner ideal} of $\Delta$ is defined by
$$
I_{\Delta}\coloneqq(\bdx_F:F \in \mathcal{N}(\Delta)) \subseteq S.
$$ 
Let $P_F$ be the prime ideal of $S$ generated by all variables $x_i$ with $i \notin F$. By {\cite[Lemma 1.5.4]{MR2724673}}, the ideal $I_{\Delta}$ has the following primary decomposition
\begin{equation}
I_{\Delta}=\bigcap_{F \in \mathcal{F}(\Delta)} P_{F}.
\label{eqn:pr-dec}
\end{equation}
Recall that the \emph{$n$-th symbolic power} of an ideal $I\subseteq S$ is defined to be
\[
I^{(n)}\coloneqq \bigcap_{\frakp\in \Ass(S/I)}\frakp^n
\]
for $n\ge 1$.
It follows from \eqref{eqn:pr-dec} that the $n$-th {symbolic power} of $I_{\Delta}$ in our situation is precisely
\begin{equation}
I_{\Delta}^{(n)}=\bigcap_{F \in \mathcal{F}(\Delta)}P_{F}^n.
\label{eqn:sym-power-dec}
\end{equation}


In order to describe $a_i(S/I_{\Delta}^{(n)})$, we need to examine the vanishing of $H_\mathfrak{m}^{i}(S/I_{\Delta}^{(n)})_{\bdalpha}$ for $\bdalpha=(\alpha_1,\dots,\alpha_s) \in \mathbb{Z}^{s}$. Due to a formula of Takayama {\cite{MR2165349}}, this problem can be reduced to examining the simplicial homology of the {degree complex}. Set $G_{\bdalpha}\coloneqq\Set{i \in [s]:\alpha_i<0}$. Recall that the \emph{degree complex} $\Delta_{\bdalpha}(I)$ of a monomial ideal $I$ is given by 
\[
    \Delta_{\bdalpha}(I) \coloneqq \Set{F\subseteq [s]\setminus G_{\bdalpha}:\bdx^a \notin IS_{F \cup G_{\bdalpha}}}.
\]
Here, $S_{F \cup G_{\bdalpha}}=S[x_i^{-1}: i \in F \cup G_{\bdalpha}]$ and $\bdx^\bdalpha=x_1^{\alpha_1}\cdots x_s^{\alpha_s}$. 

Related, for each monomial ideal $I$, let $\Delta(I)$ denote the simplicial complex 
\[
\Delta(I)\coloneqq \Set{F \subseteq[s]:\bdx_F \notin \sqrt{I}}.
\]
It is clear that $\Delta(I)=\Delta(\sqrt{I})$. And when $I$ is squarefree, it is exactly the Stanley-Reisner complex of $I$. We also have $\Delta(S)=\emptyset$ since for any $F\in [s]$, $x_F \in \sqrt{S}=S$. 

\begin{Lemma}
    [Takayama]
    \label{3.1}
    Let $I$ be a monomial ideal in $S$ and $\bdalpha$ a vector in $\ZZ^s$. Then
    \[
        \dim_{\mathbb{K}} H_\mathfrak{m}^{i}\left(S/ I\right)_{\bdalpha} =
        \begin{cases}
            \dim_{\mathbb{K}} \widetilde{H}_{i-\abs{G_{\bdalpha}}-1}\left(\Delta_{\bdalpha}(I)\right), & \text{if $G_{\bdalpha} \in \Delta(I)$,} \\
            0, & \text{otherwise.}
        \end{cases}
    \]
\end{Lemma}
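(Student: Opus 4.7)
The plan is to compute $H_{\mathfrak{m}}^i(S/I)$ via the \v{C}ech complex (\Cref{thm:2.4}) and identify its $\bdalpha$-graded strand with the augmented simplicial chain complex of $\Delta_{\bdalpha}(I)$, shifted by $\abs{G_{\bdalpha}}$. Since $S/I$ inherits a $\ZZ^s$-grading, so does the \v{C}ech complex $\check{\mathcal{C}}^{\bullet}(x_1,\ldots,x_s)\otimes (S/I)$, and taking the $\bdalpha$-piece commutes with taking cohomology.

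The first key step is a termwise analysis. For $\sigma\subseteq [s]$, the $\bdalpha$-graded piece of the localization $S[\bdx_\sigma^{-1}]$ has $\KK$-basis $\{\bdx^\bdalpha\}$ exactly when $\alpha_i\ge 0$ for all $i\notin \sigma$, that is, when $G_{\bdalpha}\subseteq \sigma$; otherwise it vanishes. Hence $((S/I)[\bdx_\sigma^{-1}])_\bdalpha$ is either $0$ or one-dimensional, and it equals $\KK$ precisely when $G_{\bdalpha}\subseteq \sigma$ and $\bdx^\bdalpha\notin IS_\sigma=IS_{F\cup G_{\bdalpha}}$, where $F=\sigma\setminus G_{\bdalpha}$. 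By the very definition of $\Delta_{\bdalpha}(I)$, this amounts to saying $F\in \Delta_{\bdalpha}(I)$. Reindexing the direct sum in the $j$-th position of the \v{C}ech complex by $F=\sigma\setminus G_{\bdalpha}$, its $\bdalpha$-piece becomes
\[
    \bigoplus_{\substack{F\in \Delta_{\bdalpha}(I)\\ \abs{F}=j-\abs{G_{\bdalpha}}}} \KK,
\]
which is, as a $\KK$-vector space, the $(j-\abs{G_{\bdalpha}}-1)$-th term of the augmented chain complex of $\Delta_{\bdalpha}(I)$.

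The second step is to verify that the \v{C}ech differential agrees, up to a global sign, with the simplicial boundary. Under the reindexing $\sigma=F\cup G_{\bdalpha}$, the \v{C}ech map sends the generator at $\sigma$ to $\pm$ the generator at $\sigma\cup\{i\}=(F\cup\{i\})\cup G_{\bdalpha}$ for each $i\in [s]\setminus \sigma=([s]\setminus G_{\bdalpha})\setminus F$, with sign $\sgn(i,\sigma\cup\{i\})$. Fixing a linear order on $[s]$ that places the elements of $G_{\bdalpha}$ first, this sign differs from the simplicial boundary sign $\sgn(i,F\cup\{i\})$ by the constant $(-1)^{\abs{G_{\bdalpha}}}$, which is absorbed in the identification. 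Consequently, the $\bdalpha$-graded \v{C}ech complex of $S/I$ equals the augmented chain complex of $\Delta_{\bdalpha}(I)$ shifted in homological degree by $\abs{G_{\bdalpha}}+1$, giving
\[
    H_{\mathfrak{m}}^{i}(S/I)_{\bdalpha}\cong \widetilde{H}_{i-\abs{G_{\bdalpha}}-1}(\Delta_{\bdalpha}(I);\KK).
\]

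Finally, the dichotomy in the statement is explained as follows. If $G_{\bdalpha}\notin \Delta(I)$, then $\bdx_{G_{\bdalpha}}\in \sqrt{I}$, so $IS_{G_{\bdalpha}}=S_{G_{\bdalpha}}$ and therefore $IS_{F\cup G_{\bdalpha}}=S_{F\cup G_{\bdalpha}}$ for every $F\subseteq [s]\setminus G_{\bdalpha}$; thus no subset $F$ satisfies $\bdx^\bdalpha\notin IS_{F\cup G_{\bdalpha}}$, and $\Delta_{\bdalpha}(I)$ is the void complex, whose reduced homology vanishes in every degree. The main subtlety I expect is the sign/bookkeeping step reconciling the \v{C}ech sign convention $\sgn(i,\sigma\cup\{i\})$ with the usual simplicial boundary signs after the reindexing $\sigma\leftrightarrow F$; everything else is a direct unwinding of definitions.
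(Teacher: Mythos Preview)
The paper does not prove this lemma; it is stated with attribution to Takayama and cited from \cite{MR2165349}, so there is no proof in the paper to compare against. Your argument is the standard \v{C}ech-complex derivation of Takayama's formula and is correct: the termwise identification of $((S/I)[\bdx_\sigma^{-1}])_\bdalpha$ with $\KK$ exactly when $G_\bdalpha\subseteq\sigma$ and $\sigma\setminus G_\bdalpha\in\Delta_\bdalpha(I)$, followed by the sign check and the observation that $\Delta_\bdalpha(I)$ is the void complex when $G_\bdalpha\notin\Delta(I)$, is precisely how this result is established in the literature.
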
 

Here, $\widetilde{H}_{i}\left(\Delta_{\bdalpha}(I)\right)$ is the $i$-th reduced simplicial homology group of the complex $\Delta_{\bdalpha}(I)$ over $\KK$.

\begin{Lemma}
    [{\cite[Theorem 4.10]{arXiv:1910.14140v2}}
    \label{Theo2.6}]
    Take the assumptions as in \Cref{setting-section-2}. Suppose in addition that $I\subseteq S$ and $J\subseteq R$ are two monomial ideals. Then, for any $\bdalpha\in \mathbb{Z}^s$, $\bdbeta\in \mathbb{Z}^r$ and $\bdgamma=(\bdalpha,\bdbeta)\in \mathbb{Z}^{s+r}$, we have the following two cases:
    \begin{enumerate}[a]
        \item  if $p=1$ while both $\Delta_{\bdalpha}\left(I^{s-\abs{\bdbeta}}\right)$ and $\Delta_{\bdbeta}\left(I^{s-\abs{\bdalpha}}\right)$ are nonempty, then
        $$
        \dim_{\mathbb{K}} H_{\mathfrak{m}+\mathfrak{n}}^{p}\left(T /(I+J+\mathfrak{m}\mathfrak{n})^{k}\right)_{\bdgamma}=\dim_{\mathbb{K}} H_{\mathfrak{m}}^{p}\left(S / I^{k-\abs{\bdbeta}}\right)_{\bdalpha}+\dim_{\mathbb{K}} H_{\mathfrak{n}}^{p}\left(R / J^{k-\abs{\bdalpha}}\right)_{\bdbeta}+1;
        $$

        \item otherwise,
        $$
        \dim_{\mathbb{K}} H_{\mathfrak{m}+\mathfrak{n}}^{p}\left(T /(I+J+\mathfrak{m}\mathfrak{n})^{k}\right)_{\bdgamma}=\dim_{\mathbb{K}} H_{\mathfrak{m}}^{p}\left(S / I^{k-\abs{\bdbeta}}\right)_{\bdalpha}+\dim_{\mathbb{K}} H_{\mathfrak{n}}^{p}\left(R / J^{k-\abs{\bdalpha}}\right)_{\bdbeta}.
        $$ 
    \end{enumerate}
    Here, $|\bdalpha|=\sum_{i=1}^s \alpha_i$ for $\bdalpha=(\alpha_1,\dots,\alpha_s)$ and one can similarly define $|\bdbeta|$.
\end{Lemma}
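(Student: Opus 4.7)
The plan is to mimic the \v{C}ech-complex strategy from the proof of \Cref{Theo}, while carefully tracking multigraded components at $(\bdalpha,\bdbeta)$. The pivotal fact from \Cref{prop2.8}\ref{prop2.8.4} is that the differentials of the \v{C}ech complex $(T/F^k)\otimes \check{\mathcal{C}}^{\bullet}(x_{1},\dots,x_{s},y_{1},\dots,y_{r})$ split as $\partial_j=\partial_j^1\oplus \partial_j^2$ for every $j\ge 1$, while at position $0$ the complex is simply $T/F^k$ and does \emph{not} split. This asymmetry is precisely what distinguishes case (a) from case (b).

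For case (b), since both $\partial_p$ and $\partial_{p-1}$ split, one obtains the decomposition
\[
H^p_{\mathfrak{m}+\mathfrak{n}}(T/F^k)\cong H^p_{\mathfrak{m}}\!\left(\tfrac{T}{(I+\mathfrak{n})^k}\right)\oplus H^p_{\mathfrak{n}}\!\left(\tfrac{T}{(J+\mathfrak{m})^k}\right)
\]
exactly as in the proof of \Cref{Theo}(a). The multigraded isomorphism
\[
\frac{T}{(I+\mathfrak{n})^k}\cong \bigoplus_{\bdbeta'\in\NN^r,\,|\bdbeta'|<k}\frac{S}{I^{k-|\bdbeta'|}}(0,-\bdbeta')
\]
and its symmetric counterpart for $T/(J+\mathfrak{m})^k$ then identify the $(\bdalpha,\bdbeta)$-graded piece with $\dim_{\KK}H^p_{\mathfrak{m}}(S/I^{k-|\bdbeta|})_{\bdalpha}+\dim_{\KK}H^p_{\mathfrak{n}}(R/J^{k-|\bdalpha|})_{\bdbeta}$, which matches the formula of part (b).

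For case (a), the differential $\partial_0$ does not split; writing $\partial_0=(\phi_1,\phi_2)$ with $\phi_1,\phi_2$ the components landing in the ``$x$-block'' and ``$y$-block'' at position~$1$, and using $\im\phi_i\subseteq \ker \partial_1^i$, a short diagram chase produces the short exact sequence
\[
0\to \frac{\im\phi_1\oplus \im\phi_2}{\im\partial_0}\to H^1_{\mathfrak{m}+\mathfrak{n}}(T/F^k)\to H^1_{\mathfrak{m}}\!\left(\tfrac{T}{(I+\mathfrak{n})^k}\right)\oplus H^1_{\mathfrak{n}}\!\left(\tfrac{T}{(J+\mathfrak{m})^k}\right)\to 0,
\]
whose rightmost term contributes the same two dimensions as in case (b). Since $\ker\phi_1=H^0_{\mathfrak{m}}(T/F^k)$ and $\ker\phi_2=H^0_{\mathfrak{n}}(T/F^k)$, a standard cokernel-of-diagonal calculation identifies
\[
\frac{\im\phi_1\oplus \im\phi_2}{\im\partial_0}\cong \frac{T/F^k}{\,H^0_{\mathfrak{m}}(T/F^k)+H^0_{\mathfrak{n}}(T/F^k)\,}.
\]

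The main obstacle is to establish that this extra quotient has dimension exactly $1$ at bidegree $(\bdalpha,\bdbeta)$ precisely when both $\Delta_{\bdalpha}(I^{k-|\bdbeta|})$ and $\Delta_{\bdbeta}(J^{k-|\bdalpha|})$ are nonempty, and is $0$ otherwise. The approach is to test the unique basis monomial $[\bdx^{\bdalpha}\bdy^{\bdbeta}]$: it survives the quotient iff $\bdx^{\bdalpha}\bdy^{\bdbeta}\notin F^k$ while its class is neither $\mathfrak{m}$-torsion nor $\mathfrak{n}$-torsion modulo $F^k$. Using the expansion $F^k=\sum_{a+b+c=k}I^a J^b \mathfrak{m}^c\mathfrak{n}^c$, these non-torsion conditions translate (via saturation arguments and a case analysis over the admissible triples $(a,b,c)$) into statements of the form $\bdx^{\bdalpha}\notin I^{k-|\bdbeta|}S_{G_{\bdalpha}}$ and $\bdy^{\bdbeta}\notin J^{k-|\bdalpha|}R_{G_{\bdbeta}}$, which by the definition of the degree complex are exactly $\Delta_{\bdalpha}(I^{k-|\bdbeta|})\neq \emptyset$ and $\Delta_{\bdbeta}(J^{k-|\bdalpha|})\neq \emptyset$. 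Carrying out this combinatorial translation, and in particular verifying that all admissible $(a,b,c)$-decompositions reduce to the separated conditions on $\bdalpha$ and $\bdbeta$ alone, is the technically delicate step.
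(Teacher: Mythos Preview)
The paper does not prove this lemma at all: it is quoted verbatim as \cite[Theorem~4.10]{arXiv:1910.14140v2} and used as a black box, so there is no ``paper's own proof'' to compare your proposal against.

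As for the proposal itself, the overall architecture is sound and is indeed in the spirit of the \v{C}ech-complex arguments used elsewhere in the paper: the splitting $\partial_j=\partial_j^1\oplus\partial_j^2$ for $j\ge 1$ from \Cref{prop2.8}\ref{prop2.8.4}, the short exact sequence isolating the ``extra'' piece $(\im\phi_1\oplus\im\phi_2)/\im\partial_0$, and the identification of that piece with $(T/F^k)/\bigl(H^0_{\mathfrak{m}}(T/F^k)+H^0_{\mathfrak{n}}(T/F^k)\bigr)$ are all correct. Two points deserve tightening. First, your treatment of case~(b) asserts that ``both $\partial_p$ and $\partial_{p-1}$ split'', but for $p=1$ the map $\partial_0$ does \emph{not} split; the $p=1$ subcase of (b) must instead be handled by your short exact sequence together with the claim that the extra quotient vanishes in degree $(\bdalpha,\bdbeta)$ when one of the two degree complexes is empty (which you do state later, but the exposition should make this the actual argument). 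Second, the ``technically delicate step'' you flag---showing that $[\bdx^{\bdalpha}\bdy^{\bdbeta}]$ survives the quotient iff both $\Delta_{\bdalpha}(I^{k-|\bdbeta|})$ and $\Delta_{\bdbeta}(J^{k-|\bdalpha|})$ are nonempty---is the entire content of the lemma and is only gestured at; note in particular that when $\bdgamma$ has a negative entry the module $(T/F^k)_{\bdgamma}$ is zero, so your quotient description gives $0$ automatically, and you must separately check that in that situation at least one of the two degree complexes is empty (or that the right-hand side also vanishes), which requires going back to Takayama's formula rather than the monomial-membership picture.
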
 

As a quick application of the above two lemmas, we finish the proof of \Cref{prop2.8}.

\begin{proof}
    [Proof of \Cref{prop2.8} \ref{prop2.8.2}]
    Notice that the ideal $\mathfrak{m}^k\mathfrak{n}^k\subseteq T$ is the fiber product of $I=\left(0\right)\subseteq S$ and $J=\left(0\right) \subseteq R$. Now, take arbitrary $\bdalpha\in \mathbb{Z}^s$ and $\bdbeta\in \mathbb{Z}^r$. 
     
    Firstly, we consider $H_{\mathfrak{m}}^{1}\left(S / I^{k-\abs{\bdbeta}}\right)_{\bdalpha}$. If $\abs{\bdbeta}<k$, then $I^{k-\abs{\bdbeta}}=0$. Hence $H_{\mathfrak{m}}^{1}\left(S / I^{k-\abs{\bdbeta}}\right)_{\bdalpha}=H_{\mathfrak{m}}^{1}\left(S\right)_{\bdalpha}=0$ by \Cref{Theo2.7}. When $\abs{\bdbeta}\geq k$, we have $I^{k-\abs{\bdbeta}}=S$. Then $H_{\mathfrak{m}}^{1}\left(S / I^{k-\abs{\bdbeta}}\right)_{\bdalpha}=H_{\mathfrak{m}}^{1}\left(0\right)_{\bdalpha}=0$. Thus for any $\bdalpha$ and $\bdbeta$, $H_{\mathfrak{m}}^{1}\left(S / I^{k-\abs{\bdbeta}}\right)_{\bdalpha}=0$.  
    Likewise, $H_{\mathfrak{n}}^{1}\left(R / J^{k-\abs{\bdalpha}}\right)_{\bdbeta}=0$. 
    
    So, according to \Cref{Theo2.6}, if $H_{\mathfrak{m}+\mathfrak{n}}^{1}\left(T /(I+J+\mathfrak{m}\mathfrak{n})^{k}\right)_{(\bdalpha,\bdbeta)}\ne 0$, then both $\Delta_{\bdalpha}\left(I^{k-\abs{\bdbeta}}\right)$ and $\Delta_{\bdbeta}\left(I^{k-\abs{\bdalpha}}\right)$ are nonempty. In the following, we will suppose that this is the case.
    
    Notice that if $\abs{\bdbeta}\geq k$, then $I^{k-\abs{\bdbeta}}=S$. Whence, for any  $F\subseteq [s]\setminus G_{\bdalpha}$, we have $\bdx^{\bdalpha} \in S_{F \cup G_{\bdalpha}}$, which implies that $F \notin \Delta_{\bdalpha}\left(S\right)$ by definition. So $\Delta_{\bdalpha}\left(I^{k-\abs{\bdbeta}}\right)=\emptyset$, contradicting to our previous assumption. Thus $\abs{\bdbeta} \leq k-1$  and similarly $\abs{\bdalpha} \leq k-1$. Consequently, $a_1\left(\frac{T}{\mathfrak{m}^k\mathfrak{n}^k} \right)\leq2k-2$. 
    
    On the other hand, let $\bdalpha=(k-1,0,\dots,0)\in \mathbb{Z}^s$ and $\bdbeta=(k-1,0,\dots,0)\in \mathbb{Z}^r$. we have $[s]\setminus\{1\} \in \Delta_{\bdalpha}\left(I^{k-\abs{\bdbeta}}\right)$ since $x_1^{k-1} \notin (0)S_{[s]\setminus\{1\}}$. So $\Delta_{\bdalpha}\left(I^{k-\abs{\bdbeta}}\right)$ is nonempty. Likewise, $\Delta_{\bdbeta}\left(J^{k-\abs{\bdalpha}}\right)$ is nonempty. Thus, $H_{\mathfrak{m}+\mathfrak{n}}^{1}\left(T /(\mathfrak{m}\mathfrak{n})^{k}\right)_{(\bdalpha,\bdbeta)}\neq 0$ via  \Cref{Theo2.6}, meaning $a_1\left(\frac{T}{\mathfrak{m}^k\mathfrak{n}^k} \right)\ge 2k-2$. So $a_1\left(\frac{T}{\mathfrak{m}^k\mathfrak{n}^k} \right)=2k-2$, completing the proof.
\end{proof} 

The following lemma gives a precise description of $\Delta_{\bdalpha}(I_{\Delta}^{(n)})$.  

\begin{Lemma}
    [{\cite[Lemma 1.3]{MR3482347}}]
    \label{3.2}
    Assume that $G_{\bdalpha} \in \Delta$ for some $\bdalpha \in \mathbb{Z}^r$. Then
    \[
        \mathcal{F}(\Delta_{\bdalpha}(I_{\Delta}^{(n)}))=\Set{F\in \mathcal{F}(\lk_{\Delta}(G_{\bdalpha})): \sum_{i \notin F \cup G_{\bdalpha}}a_i \leq n-1}.
    \]
\end{Lemma}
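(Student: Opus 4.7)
The strategy is to unwind the definition of $\Delta_{\bdalpha}(I_{\Delta}^{(n)})$ via the primary decomposition \eqref{eqn:sym-power-dec} and then match the result against the facets of the link. First I would determine which $F \subseteq [s] \setminus G_{\bdalpha}$ belong to the degree complex. Since localization commutes with finite intersections,
\[
    I_{\Delta}^{(n)} S_{F \cup G_{\bdalpha}} = \bigcap_{F' \in \mathcal{F}(\Delta)} P_{F'}^n S_{F \cup G_{\bdalpha}},
\]
and for each facet $F'$ of $\Delta$, one has $P_{F'}^n S_{F \cup G_{\bdalpha}} = S_{F \cup G_{\bdalpha}}$ unless $F \cup G_{\bdalpha} \subseteq F'$. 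Indeed $P_{F'}$ is generated by $\{x_i : i \notin F'\}$, and any such $x_i$ becomes a unit as soon as its index lies in $F \cup G_{\bdalpha}$. Hence only those facets $F'$ containing $F \cup G_{\bdalpha}$ can impose a nontrivial condition.

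Next, for such $F'$ all generators of $P_{F'}$ have indices $i \notin F \cup G_{\bdalpha}$, so by the definition of $G_{\bdalpha}$ the corresponding exponents $\alpha_i$ are non-negative. A direct comparison of multidegrees in the (monomial) localization then shows that $\bdx^{\bdalpha} \in P_{F'}^n S_{F \cup G_{\bdalpha}}$ holds if and only if $\sum_{i \notin F'} \alpha_i \geq n$. Combining this with the previous paragraph, $F \in \Delta_{\bdalpha}(I_{\Delta}^{(n)})$ is equivalent to the existence of some $F' \in \mathcal{F}(\Delta)$ with $F \cup G_{\bdalpha} \subseteq F'$ and $\sum_{i \notin F'} \alpha_i \leq n-1$.

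Finally, I would translate this characterization into one for facets of the link. Because $G_{\bdalpha} \in \Delta$, the facets of $\lk_{\Delta}(G_{\bdalpha})$ are precisely the sets $F' \setminus G_{\bdalpha}$ for facets $F'$ of $\Delta$ that contain $G_{\bdalpha}$. Given any $F \in \Delta_{\bdalpha}(I_{\Delta}^{(n)})$, the facet $F'$ furnished above satisfies $F \subseteq F' \setminus G_{\bdalpha}$, and conversely $F' \setminus G_{\bdalpha}$ itself belongs to $\Delta_{\bdalpha}(I_{\Delta}^{(n)})$. A short argument using the maximality of $F'$ in $\Delta$ shows that no face of $\Delta_{\bdalpha}(I_{\Delta}^{(n)})$ can strictly contain $F' \setminus G_{\bdalpha}$, so $F' \setminus G_{\bdalpha}$ is a facet of the degree complex. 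Setting $F = F' \setminus G_{\bdalpha}$ rewrites the inequality as $\sum_{i \notin F \cup G_{\bdalpha}} \alpha_i \leq n-1$, and the claimed description of $\mathcal{F}(\Delta_{\bdalpha}(I_{\Delta}^{(n)}))$ follows.

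The only real obstacle is bookkeeping: one must carefully track which components of $\bdalpha$ remain non-negative after inverting the variables indexed by $F \cup G_{\bdalpha}$, and verify that the maximality property passes cleanly from $\Delta$ to the degree complex, so that no spurious facets outside the list $\{F' \setminus G_{\bdalpha}\}$ can occur.
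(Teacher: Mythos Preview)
The paper does not supply its own proof of this lemma; it is quoted verbatim from \cite[Lemma~1.3]{MR3482347}. Your argument via the primary decomposition \eqref{eqn:sym-power-dec} and localization is correct and is essentially the standard proof one finds in that reference: localize $I_\Delta^{(n)}$ at $\bdx_{F\cup G_{\bdalpha}}$, discard the components $P_{F'}^n$ that become the unit ideal, and read off the membership condition $\sum_{i\notin F'}\alpha_i\le n-1$ from the surviving ones. The passage from faces to facets that you sketch in the last paragraph is exactly right, and your caveat about tracking non-negativity of the exponents outside $F\cup G_{\bdalpha}$ is the only place where care is needed.
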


The concept of {monomial} {localization} was introduced in \cite{MR3011344} as a simplification of the localization. Fix a subset $F \subseteq[s]$.  Let $\pi_F : S \rightarrow \mathbb{K}[x_i : i \in [s] \setminus F]$ be the $\mathbb{K}$-algebra homomorphism sending $x_i$ to $x_i$ for $ i \in [s] \setminus F$ and $x_i$ to 1 for $i \in F$. The image of a monomial ideal $I$ of $S$ under the map $\pi_F$ is called the \emph{monomial localization} of $I$ with respect to $F$ and will be denoted by $I[F]$. It is clear that if $I$ and $J$ are both monomial ideals of $S$, then $(IJ)[F] = I[F]J[F]$ and $(I\cap J)[F] = I[F]\cap J[F]$.

The degree complex can be expressed using the monomial localization as follows.

\begin{Lemma}
    [{\cite[Lemmas 1.4, 1.5]{arXiv:1808.07266} and \cite[Lemma 1.3]{MR2558862}}]
    \label{2.3} 
    Let $I$ be a monomial ideal in $S=\mathbb{K} [x_{1},\dots, x_{s}]$ and $\bdalpha=(\alpha_1,\dots,\alpha_s)$ be a vector in $\mathbb{Z}^s$. Define $\bdalpha_+$ to be the non-negative part of $\bdalpha$, i.e., $\bdalpha_+\coloneqq (\alpha_1',\dots,\alpha_s')$ where $\alpha_i'=\max(0,\alpha_i)$ for each $i$.
    \begin{enumerate}[a]	
        \item \label{Lem4.1} 
            $\Delta_{\bdalpha}(I)$ is a subcomplex of $\Delta(I)$. Moreover, if $I$ has no embedded associated prime and $\bdalpha \in \mathbb{N}^s$, then $\calF(\Delta_{\bdalpha}(I))\subseteq \calF(\Delta(I))$.
        \item\label{Lem4.2} 
            $\Delta_{\bdalpha}(I)=\{F \subseteq[s]\setminus G_{\bdalpha} : \bdx^{\bdalpha_+} \notin I[F \cup G_{\bdalpha} ]S\}$. 
        \item \label{Lem4.3}
            If $G_{\bdalpha} \neq \emptyset$, then $\Delta_{\bdalpha}(I)=\lk_{\Delta_{\bdalpha_+}(I)}(G_{\bdalpha})$.
    \end{enumerate}
\end{Lemma}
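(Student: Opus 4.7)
My plan is to establish part (b) first, since it converts the defining localization condition for $\Delta_{\bdalpha}(I)$ into a concrete divisibility condition in $S$ that streamlines both (a) and (c). By definition, $F \in \Delta_{\bdalpha}(I)$ means $\bdx^{\bdalpha} \notin I S_{F \cup G_{\bdalpha}}$. A direct unpacking of this localization condition shows $\bdx^{\bdalpha} \in I S_{F \cup G_{\bdalpha}}$ iff some monomial generator $\bdx^{\bdgamma}$ of $I$ satisfies $\gamma_j \le \max(\alpha_j,0)$ for every $j \notin F \cup G_{\bdalpha}$ (the coordinates in $F \cup G_{\bdalpha}$ are irrelevant, as the corresponding variables are inverted). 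This in turn is exactly the condition that $\pi_{F \cup G_{\bdalpha}}(\bdx^{\bdgamma})$ divides $\bdx^{\bdalpha_+}$, i.e., $\bdx^{\bdalpha_+} \in I[F \cup G_{\bdalpha}]S$. Summing over generators yields (b).

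For (c), assume $G_{\bdalpha} \neq \emptyset$, so $G_{\bdalpha_+} = \emptyset$. Applying (b) to $\bdalpha_+$ gives $F \cup G_{\bdalpha} \in \Delta_{\bdalpha_+}(I)$ iff $\bdx^{\bdalpha_+} \notin I[F \cup G_{\bdalpha}]S$, which by (b) applied to $\bdalpha$ is the same as $F \in \Delta_{\bdalpha}(I)$. The condition $F \cap G_{\bdalpha} = \emptyset$ is automatic from $F \subseteq [s]\setminus G_{\bdalpha}$, so $\Delta_{\bdalpha}(I) = \lk_{\Delta_{\bdalpha_+}(I)}(G_{\bdalpha})$.

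The first half of (a) splits into two checks. That $\Delta_{\bdalpha}(I)$ is closed under passing to subsets follows from the observation that if $F' \subseteq F$ and $\bdx^{\bdalpha} \in I S_{F' \cup G_{\bdalpha}}$, then pushing this inclusion through $S_{F' \cup G_{\bdalpha}} \subseteq S_{F \cup G_{\bdalpha}}$ forces $\bdx^{\bdalpha} \in I S_{F \cup G_{\bdalpha}}$. For the inclusion $\Delta_{\bdalpha}(I) \subseteq \Delta(I)$: if $F \in \Delta_{\bdalpha}(I)$ but $\bdx_F \in \sqrt{I}$, some monomial generator of $I$ has support inside $F$, becoming a unit in $S_{F \cup G_{\bdalpha}}$ and trivializing the localization, contradicting $\bdx^{\bdalpha} \notin I S_{F \cup G_{\bdalpha}}$.

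The main obstacle is the second sentence of (a). Here I would argue via a minimal primary decomposition $I = Q_1 \cap \cdots \cap Q_r$ with $\sqrt{Q_i} = P_i = (x_j : j \in \sigma_i)$; under the no-embedded-prime hypothesis, every $P_i$ is a minimal prime and the facets of $\Delta(I)$ are precisely $F_i := [s] \setminus \sigma_i$. Given a facet $F$ of $\Delta_{\bdalpha}(I)$ with $\bdalpha \in \NN^s$ (so $G_{\bdalpha} = \emptyset$), I would use the decomposition $I S_F = \bigcap_{F \subseteq F_i} Q_i S_F$ to pick an index $i$ with $F \subseteq F_i$ and $\bdx^{\bdalpha} \notin Q_i S_F$. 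The minimality of $P_i$ forces $I S_{F_i} = Q_i S_{F_i}$ (no other $F_l$ contains $F_i$), and because every $x_j$ with $j \in F_i$ is a non-zero-divisor modulo $Q_i$ (since $x_j \notin P_i$), the condition $\bdx^{\bdalpha} \notin Q_i S_F$ upgrades to $\bdx^{\bdalpha} \notin Q_i S_{F_i}$. Hence $F_i \in \Delta_{\bdalpha}(I)$, and maximality of $F$ forces $F = F_i$, placing $F$ in $\calF(\Delta(I))$. The no-embedded-prime assumption is essential here: without it, the example $I = (x_1^2, x_1 x_2)$ with $\bdalpha = (1,0)$ already yields a facet $\emptyset$ of $\Delta_{\bdalpha}(I)$ that is not a facet of $\Delta(I)$.
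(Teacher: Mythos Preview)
Your proof is correct and self-contained. Note, however, that the paper does not actually supply a proof of this lemma: it is quoted as a known result from \cite[Lemmas 1.4, 1.5]{arXiv:1808.07266} and \cite[Lemma 1.3]{MR2558862}, so there is no in-paper argument to compare against. Your strategy of establishing (b) first and then deriving (a) and (c) from it is the natural one and matches how these facts are typically proved in the cited sources; the key step for the second assertion in (a)---using a minimal primary decomposition together with the fact that the variables outside a minimal prime $P_i$ are non-zero-divisors modulo the $P_i$-primary component $Q_i$, so that $Q_iS_{F_i}\cap S=Q_i$---is exactly the right mechanism, and your counterexample with $I=(x_1^2,x_1x_2)$ correctly illustrates why the no-embedded-prime hypothesis cannot be dropped.
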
  

\begin{Lemma}
    \label{Lem4.4}
    Let $\Delta$ be a $k$-dimensional complex on $[s]$. If $F \in \Delta$ with $\dim(F)=k$, then
    $$
    I_{\Delta}[F]=(x_i : i\in[s]\setminus F).
    $$ 
\end{Lemma}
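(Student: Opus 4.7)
The plan is to combine the standard primary decomposition $I_\Delta=\bigcap_{G\in\mathcal{F}(\Delta)}P_G$ recorded in \eqref{eqn:pr-dec} with the multiplicativity/intersection-compatibility of monomial localization noted just before Lemma \ref{2.3}. Since $(I\cap J)[F]=I[F]\cap J[F]$ for monomial ideals, I would first write
\[
I_\Delta[F]=\bigcap_{G\in\mathcal{F}(\Delta)}P_G[F],
\]
and then analyze each factor $P_G[F]$ individually.

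The key observation is that because $\dim(F)=k=\dim(\Delta)$ and $F\in\Delta$, the face $F$ is automatically a facet of $\Delta$: any face of $\Delta$ properly containing $F$ would have dimension strictly greater than $k$, which is impossible. With this in hand, I would split the intersection into two cases. In the case $G=F$, every generator $x_i$ of $P_F=(x_i:i\notin F)$ has $i\notin F$, so $\pi_F(x_i)=x_i$, giving $P_F[F]=(x_i:i\in[s]\setminus F)$, which is exactly the claimed right-hand side. In the case $G\ne F$, since both $F$ and $G$ are facets of $\Delta$, neither contains the other; in particular there exists some index $i\in F\setminus G$, so $x_i$ is one of the generators of $P_G$, and $\pi_F(x_i)=1$. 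Consequently $P_G[F]$ contains $1$ and equals the whole ring $\mathbb{K}[x_i:i\in[s]\setminus F]$, contributing nothing to the intersection.

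Putting these cases together gives
\[
I_\Delta[F]=P_F[F]\cap\bigcap_{G\ne F}\mathbb{K}[x_i:i\in[s]\setminus F]=(x_i:i\in[s]\setminus F),
\]
which is the desired equality. There is no real obstacle here; the only point that deserves an explicit sentence is the verification that the hypothesis $\dim(F)=\dim(\Delta)$ forces $F$ to be a facet, since the rest is a direct computation with the primary decomposition and the definition of $\pi_F$.
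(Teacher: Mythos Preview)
Your proof is correct but proceeds differently from the paper. The paper argues by double containment: since $F\in\Delta$, no minimal non-face lies inside $F$, so $1\notin I_\Delta[F]$ and hence $I_\Delta[F]\subseteq(x_i:i\in[s]\setminus F)$; conversely, for each $i\notin F$ the set $F\cup\{i\}$ exceeds the dimension of $\Delta$ and thus contains a minimal non-face of the form $F'\cup\{i\}$ with $F'\subseteq F$, so $x_i=\pi_F(\bdx_{F'\cup\{i\}})\in I_\Delta[F]$. Your argument instead pushes the primary decomposition \eqref{eqn:pr-dec} through the monomial localization, using the intersection compatibility recorded before \Cref{2.3}, and then observes that every component $P_G[F]$ with $G\ne F$ becomes the unit ideal because $F$, being top-dimensional, is itself a facet. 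Both arguments are short; yours is slightly more structural and makes the role of the facet condition on $F$ explicit, while the paper's is a touch more self-contained in that it works directly with minimal non-faces rather than invoking the decomposition and the localization-intersection identity.
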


\begin{proof}
    As $1 \notin I_{\Delta}[F]$, it follows that $I_{\Delta}[F] \subseteq (x_i : i\in [s] \setminus F)$. Conversely, for each $i \in [s] \setminus F$, $F\cup \{i\} \notin \Delta$ since $\dim(\Delta)=k$. So $\bdx_{F'\cup\{i\}} \in I_{\Delta}$ for some $F'\subseteq F$, implying that $x_i \in I_{\Delta}[F]$. Since this holds for any $i\in [s]\setminus F$, we have the converse containment $I_{\Delta}[F] \supseteq (x_i : i\in [s] \setminus F)$.
\end{proof}

\begin{Proposition}
    \label{Prop4.5}
    Let $\Delta$ be a $k$-dimensional complex on $[s]$. For each $\bdalpha=(\alpha_1,\dots,\alpha_s)\in\mathbb{N}^s$ and each $k$-dimensional simplex $F\subseteq [s]$, the following statements are equivalent:
    \begin{enumerate}[a]
        \item \label{Prop4.5-a} $F \in \Delta_{\bdalpha}(I_{\Delta}^n)$;
        \item \label{Prop4.5-b} $F\in \Delta$ and $\sum_{i \in [s]\setminus F} \alpha_i \leq n-1.$
    \end{enumerate}
\end{Proposition}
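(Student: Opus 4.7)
The plan is to apply the characterization in \Cref{2.3}\ref{Lem4.2}. Since $\bdalpha \in \NN^s$, we have $G_{\bdalpha} = \emptyset$ and $\bdalpha_+ = \bdalpha$, so
\[
    \Delta_{\bdalpha}(I_{\Delta}^n) = \Set{F \subseteq [s] : \bdx^{\bdalpha} \notin I_{\Delta}^n[F]\cdot S}.
\]
The key observation is that monomial localization respects products, giving $I_{\Delta}^n[F] = (I_{\Delta}[F])^n$, so the membership $F \in \Delta_{\bdalpha}(I_{\Delta}^n)$ reduces to $\bdx^{\bdalpha} \notin (I_{\Delta}[F])^n \cdot S$. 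The main tool is then \Cref{Lem4.4}, which identifies $I_{\Delta}[F]$ with the simple ideal $(x_i : i \in [s]\setminus F)$ whenever $F$ is a $k$-dimensional face of $\Delta$.

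For \ref{Prop4.5-a} $\Rightarrow$ \ref{Prop4.5-b}, I would first deduce $F \in \Delta$ from \Cref{2.3}\ref{Lem4.1}, via the inclusions $\Delta_{\bdalpha}(I_{\Delta}^n) \subseteq \Delta(I_{\Delta}^n) = \Delta(I_{\Delta}) = \Delta$. Since $\dim(F) = k = \dim(\Delta)$, \Cref{Lem4.4} gives $(I_{\Delta}[F])^n = (x_i : i \in [s]\setminus F)^n$. A monomial $\bdx^{\bdalpha}$ lies outside this ideal exactly when its total degree in the variables $x_i$ with $i \in [s]\setminus F$ is strictly less than $n$, i.e., $\sum_{i \in [s]\setminus F} \alpha_i \leq n-1$.

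For the converse \ref{Prop4.5-b} $\Rightarrow$ \ref{Prop4.5-a}, the hypothesis $F \in \Delta$ again triggers \Cref{Lem4.4}, so that $(I_{\Delta}[F])^n = (x_i : i \in [s]\setminus F)^n$. The inequality $\sum_{i \in [s]\setminus F} \alpha_i \leq n-1 < n$ then forces $\bdx^{\bdalpha} \notin (x_i : i \in [s]\setminus F)^n \cdot S$, whence $F \in \Delta_{\bdalpha}(I_{\Delta}^n)$. There is no substantive obstacle here: the proposition is essentially a bookkeeping exercise, combining \Cref{2.3}, the compatibility of monomial localization with products, and the explicit description of $I_{\Delta}[F]$ at a facet of maximum dimension given by \Cref{Lem4.4}.
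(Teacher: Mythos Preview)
Your proof is correct and follows essentially the same route as the paper's own argument: both directions hinge on \Cref{2.3}\ref{Lem4.1}, \Cref{2.3}\ref{Lem4.2}, the multiplicativity of monomial localization, and the explicit description of $I_{\Delta}[F]$ from \Cref{Lem4.4}. There is nothing to add.
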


\begin{proof}
    Suppose first that $F\in \Delta_{\bdalpha}(I_{\Delta}^n)$. Since $G_{\bdalpha}=\emptyset$, according to \Cref{2.3}\ref{Lem4.1} we have $F \in \Delta_{\bdalpha}(I_{\Delta}^n) \subseteq\Delta(I_{\Delta}^n)=\Delta(I_{\Delta})=\Delta$. It follows from \Cref{2.3}\ref{Lem4.2} and \Cref{Lem4.4} that 
    \begin{equation*}
        \bdx^{\bdalpha}\notin I_{\Delta}^n[F]S
        =\left(I_{\Delta}[F]\right)^nS
        =(x_i :i\in[s]\setminus F)^nS. 
    \end{equation*}
    Hence $\sum_{i \in [s]\setminus F} \alpha_i \leq n-1$ and this proves \ref{Prop4.5-a}$\Rightarrow$\ref{Prop4.5-b}. 
    
    Conversely, suppose that $F \in \Delta$ and $\sum_{i \in [s]\setminus F} \alpha_i \leq n-1$. Then by \Cref{Lem4.4}, 
    \begin{equation*}
        \bdx^{\bdalpha}\notin (x_i :i\in[s]\setminus F)^n=\left(I_{\Delta}[F]\right)^nS
        =I_{\Delta}^n[F]S.
    \end{equation*}
    Thus $F \in \Delta_{\bdalpha}(I_{\Delta}^n)$ via \Cref{2.3}\ref{Lem4.2},  which proves \ref{Prop4.5-b}$\Rightarrow$\ref{Prop4.5-a}.
\end{proof}

\begin{Lemma}
    \label{Lem4.8}
    If $\Delta$ is a complex on $[s]$ and $I_\Delta$ is the Stanley-Reisner ideal in $S=\mathbb{K} [x_{1},\dots, x_{s}]$ over a field $\mathbb{K}$, then 
    $
    \Delta=\Delta\left(I_{\Delta}^{n}\right)=\Delta(I_{\Delta}^{(n)})
    $ 
    for all positive integer $n$.
\end{Lemma}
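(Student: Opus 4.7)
The plan is to reduce everything to the observation, already noted in the excerpt, that $\Delta(I) = \Delta(\sqrt{I})$ for any monomial ideal $I$, which is immediate from the definition $\Delta(I) = \{F \subseteq [s] : \bdx_F \notin \sqrt{I}\}$. Thus I only need to identify the radicals of $I_\Delta^n$ and $I_\Delta^{(n)}$, and then check that $\Delta(I_\Delta) = \Delta$.

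First I would handle $I_\Delta^n$. Since $I_\Delta$ is a squarefree monomial ideal, it is radical, so $\sqrt{I_\Delta^n} = \sqrt{I_\Delta} = I_\Delta$, hence $\Delta(I_\Delta^n) = \Delta(I_\Delta)$. Next I would handle $I_\Delta^{(n)}$ using the primary decomposition \eqref{eqn:sym-power-dec}: since
\[
\sqrt{I_\Delta^{(n)}} = \sqrt{\bigcap_{F\in \mathcal{F}(\Delta)} P_F^n} = \bigcap_{F\in \mathcal{F}(\Delta)} \sqrt{P_F^n} = \bigcap_{F\in \mathcal{F}(\Delta)} P_F = I_\Delta,
\]
we also have $\Delta(I_\Delta^{(n)}) = \Delta(I_\Delta)$.

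Finally, I would verify the identity $\Delta(I_\Delta) = \Delta$. For a subset $F \subseteq [s]$, one has $\bdx_F \in I_\Delta$ if and only if some generator $\bdx_{F'}$ with $F' \in \mathcal{N}(\Delta)$ divides $\bdx_F$, i.e., $F' \subseteq F$ for some minimal non-face $F'$. This happens exactly when $F$ itself is a non-face of $\Delta$. Therefore $\bdx_F \notin I_\Delta \iff F \in \Delta$, giving $\Delta(I_\Delta) = \Delta$.

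There is no serious obstacle here; the statement is essentially a bookkeeping lemma that sets up the later use of Takayama's formula (\Cref{3.1}) for $I_\Delta^n$ and $I_\Delta^{(n)}$, allowing one to control when $G_{\bdalpha}\in \Delta(I)$ simply by checking $G_{\bdalpha}\in \Delta$. The only point to be careful about is applying the primary decomposition formula \eqref{eqn:sym-power-dec} correctly and recalling that radical distributes across finite intersections.
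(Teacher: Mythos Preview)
Your proof is correct and follows essentially the same route as the paper: both reduce to the identity $\Delta(I)=\Delta(\sqrt{I})$, compute $\sqrt{I_\Delta^{(n)}}=I_\Delta$ via the primary decomposition \eqref{eqn:sym-power-dec}, and invoke $\Delta(I_\Delta)=\Delta$. The only difference is that you spell out the verification of $\Delta(I_\Delta)=\Delta$ explicitly, whereas the paper simply cites the remark preceding the lemma that $\Delta(I)$ coincides with the Stanley--Reisner complex when $I$ is squarefree.
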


\begin{proof}
    It follows from \eqref{eqn:pr-dec} and \eqref{eqn:sym-power-dec} that
    \begin{align*}	
        \sqrt{I_{\Delta}^{(n)}}=\sqrt{\bigcap_{F \in \mathcal{F}(\Delta)}P_{F}^n}=\bigcap_{F \in \mathcal{F}(\Delta)}\sqrt{P_{F}^n}=\bigcap_{F \in \mathcal{F}(\Delta)}\sqrt{P_{F}}=\sqrt{\bigcap_{F \in \mathcal{F}(\Delta)}P_{F}}=\sqrt{I_{\Delta}}=\sqrt{I_{\Delta}^{n}}.
    \end{align*}
    Therefore,
    \[
    \Delta(I_{\Delta}^{(n)}) =\Delta\left(\sqrt{I_{\Delta}^{(n)}}\right)=\Delta\left(\sqrt{I_{\Delta}^{n}}\right)=\Delta\left(I_{\Delta}^{n}\right),
    \]
    and they agree with $\Delta(\sqrt{I_\Delta})=\Delta(I_\Delta)=\Delta$.
\end{proof}  

Recall that the \emph{pure $i$-th skeleton} of $\Delta$ is the pure simplicial complex $\Delta^{(i)}$ whose facets are the faces $F$ of $\Delta$ with $\dim(F)=i$.

\begin{Proposition}
    \label{Lem4.7}
    Let $\Delta$ be a $k$-dimensional simplicial complex over $[s]$. For any $\bdalpha \in \mathbb{Z}^s$ with $G_{\bdalpha}\in \Delta$, we have $\Delta_{\bdalpha}(I_{\Delta}^n)^{(k)}=\Delta_{\bdalpha}(I_{\Delta}^{(n)})^{(k)}$.
\end{Proposition}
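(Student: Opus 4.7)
The plan is to exploit that a pure $k$-dimensional complex is determined by its set of $k$-dimensional facets, so the desired equality reduces to showing that $\Delta_\bdalpha(I_\Delta^n)$ and $\Delta_\bdalpha(I_\Delta^{(n)})$ have the same collection of $k$-dimensional faces. I would then split the argument according to whether $G_\bdalpha$ is empty.

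In the case $G_\bdalpha = \emptyset$, i.e., $\bdalpha \in \mathbb{N}^s$, \Cref{Prop4.5} directly characterizes the $k$-dimensional faces of $\Delta_\bdalpha(I_\Delta^n)$ as those $F \in \Delta$ with $\dim F = k$ and $\sum_{i \in [s]\setminus F}\alpha_i \le n-1$. On the symbolic side, \Cref{3.2} (together with $\lk_\Delta(\emptyset) = \Delta$) identifies the facets of $\Delta_\bdalpha(I_\Delta^{(n)})$ as $\{F \in \mathcal{F}(\Delta) : \sum_{i \notin F}\alpha_i \le n-1\}$. By \Cref{2.3}\ref{Lem4.1} combined with \Cref{Lem4.8}, one has $\Delta_\bdalpha(I_\Delta^{(n)}) \subseteq \Delta(I_\Delta^{(n)}) = \Delta$, so $\Delta_\bdalpha(I_\Delta^{(n)})$ has dimension at most $k$ and every one of its $k$-dimensional faces is automatically a facet; moreover, since $\dim \Delta = k$, every $k$-dimensional face of $\Delta$ is already a facet of $\Delta$. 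The two descriptions of the $k$-dimensional faces therefore coincide.

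When $G_\bdalpha \neq \emptyset$, I would show that both pure $k$-skeletons are empty. By \Cref{2.3}\ref{Lem4.3}, $\Delta_\bdalpha(I_\Delta^n) = \lk_{\Delta_{\bdalpha_+}(I_\Delta^n)}(G_\bdalpha)$, and since $\Delta_{\bdalpha_+}(I_\Delta^n) \subseteq \Delta(I_\Delta^n) = \Delta$ has dimension at most $k$, the link has dimension at most $k - |G_\bdalpha| < k$, which rules out any $k$-dimensional face. The same argument, applied to $I_\Delta^{(n)}$ via \Cref{Lem4.8}, shows $\Delta_\bdalpha(I_\Delta^{(n)})$ also has no $k$-dimensional face. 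Hence both skeletons are the empty complex and the stated equality is trivial in this case.

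The main bookkeeping to keep straight is the interaction between \Cref{Lem4.8}, \Cref{2.3}, \Cref{Prop4.5} and \Cref{3.2}: one must invoke $\Delta(I_\Delta^n) = \Delta(I_\Delta^{(n)}) = \Delta$ early to bound dimensions, and then use purity of dimension $k$ to identify $k$-dimensional faces with $k$-dimensional facets of the ambient complex $\Delta$. Beyond this routine check there is no substantive obstacle; the result drops out by direct comparison.
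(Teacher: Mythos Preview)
Your proposal is correct and follows essentially the same route as the paper: you split on whether $G_\bdalpha$ is empty, use \Cref{2.3}\ref{Lem4.3} together with $\Delta(I_\Delta^n)=\Delta(I_\Delta^{(n)})=\Delta$ to dispose of the case $G_\bdalpha\neq\emptyset$ by a dimension count, and in the case $G_\bdalpha=\emptyset$ match the $k$-dimensional faces via \Cref{Prop4.5} on the ordinary-power side and \Cref{3.2} on the symbolic side. The only cosmetic differences are that the paper treats the two cases in the opposite order and, for $G_\bdalpha\neq\emptyset$, bounds $\dim\Delta_\bdalpha(I_\Delta^{(n)})$ directly from \Cref{3.2} rather than via \Cref{2.3}\ref{Lem4.3}; also note that the paper writes the resulting skeleton as $\{\emptyset\}$ rather than ``the empty complex'', but this does not affect the argument.
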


\begin{proof}
    If $G_{\bdalpha}\ne \emptyset$, then $\dim (\Delta_{\bdalpha}(I_{\Delta}^{(n)}))\le \dim(\lk_{\Delta}(G_{\bdalpha}))<\dim(\Delta)=k$ by \Cref{3.2}. 
    Similarly, if $G_{\bdalpha}\ne \emptyset$, then $\dim(\Delta_\bdalpha(I_\Delta^n))< \dim(\Delta_{\bdalpha_+}(I_\Delta^n))\le \dim(\Delta(I_\Delta^n))=\dim(\Delta)=k$ by
    \Cref{2.3} \ref{Lem4.1} and \ref{Lem4.3}. Therefore, $\Delta_{\bdalpha}(I_{\Delta}^n)^{(k)}=\{\emptyset\}=\Delta_{\bdalpha}(I_{\Delta}^{(n)})^{(k)}$. 
    
    When $G_{\bdalpha}=\emptyset$, then $\lk_{\Delta}(G_{\bdalpha})=\Delta$. Now, for each $F\in\Delta$ with $\dim(F)=k$, we have
    \begin{align*}
        F \in \Delta_{\bdalpha}(I_{\Delta}^{(n)}) \Longleftrightarrow \text{$F$ is a facet of $\Delta$ and $\sum_{i \in [s]\setminus F} \alpha_i \leq n-1$} \Longleftrightarrow F \in \Delta_{\bdalpha}(I_{\Delta}^{n}).
    \end{align*}
    The first equivalence comes from \Cref{3.2} and the second comes from \Cref{Prop4.5}. So we can conclude safely with $\Delta_{\bdalpha}(I_{\Delta}^n)^{(k)}=\Delta_{\bdalpha}(I_{\Delta}^{(n)})^{(k)}$.
\end{proof}

Now, we can state the second main result of this paper.

\begin{Theorem}
\label{thm:3.8}
    Let $\Delta$ be a $k$-dimensional complex on $[s]$. If $I_\Delta$ is the Stanley-Reisner ideal in the polynomial ring $S=\mathbb{K} [x_{1},\dots, x_{s}]$ over a field $\mathbb{K}$, then $a_{k+1}(S/ I_{\Delta}^{(n)})=a_{k+1}\left(S/ I_{\Delta}^{n}\right)$ for all $n \geq 1$.
\end{Theorem}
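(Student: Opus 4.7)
The plan is to apply Takayama's formula (\Cref{3.1}) multigradedly to both $S/I_\Delta^n$ and $S/I_\Delta^{(n)}$ and verify that the graded pieces of $H_{\mathfrak{m}}^{k+1}$ agree term by term. Fix $\bdalpha \in \mathbb{Z}^s$. By \Cref{Lem4.8}, the support condition ``$G_\bdalpha \in \Delta(I)$'' is the same for $I = I_\Delta^n$ and for $I = I_\Delta^{(n)}$, and outside this condition both local cohomology pieces vanish. In the remaining case $G_\bdalpha \in \Delta$, \Cref{3.1} reduces the theorem to the homological comparison
\[
\widetilde{H}_{k - \abs{G_\bdalpha}}\bigl(\Delta_\bdalpha(I_\Delta^n)\bigr) \cong \widetilde{H}_{k - \abs{G_\bdalpha}}\bigl(\Delta_\bdalpha(I_\Delta^{(n)})\bigr).
\]

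The key topological observation I intend to use is that for a simplicial complex $X$ of dimension at most $d$, the reduced top homology $\widetilde{H}_d(X)$ depends only on the pure $d$-skeleton $X^{(d)}$, because $C_d(X) = C_d(X^{(d)})$ and the image of $\partial_d$ already lies in $C_{d-1}(X^{(d)})$. Combined with the dimension bounds $\dim\Delta_\bdalpha(I_\Delta^n),\,\dim\Delta_\bdalpha(I_\Delta^{(n)}) \le k - \abs{G_\bdalpha}$, which follow from \Cref{2.3}, \Cref{3.2} and \Cref{Lem4.8}, the comparison of top reduced homologies is implied by the combinatorial identity
\[
\Delta_\bdalpha(I_\Delta^n)^{(k - \abs{G_\bdalpha})} = \Delta_\bdalpha(I_\Delta^{(n)})^{(k - \abs{G_\bdalpha})}.
\]
When $G_\bdalpha = \emptyset$ this is precisely \Cref{Lem4.7}. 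When $G_\bdalpha \neq \emptyset$, I plan to combine \Cref{2.3}\ref{Lem4.3}, which gives $\Delta_\bdalpha(I) = \lk_{\Delta_{\bdalpha_+}(I)}(G_\bdalpha)$, with a general identity $\lk_X(G)^{(k-\abs{G})} = \lk_{X^{(k)}}(G)$ valid for $X$ of dimension at most $k$; then applying \Cref{Lem4.7} to $\bdalpha_+$ (for which $G_{\bdalpha_+} = \emptyset$) yields $\Delta_{\bdalpha_+}(I_\Delta^n)^{(k)} = \Delta_{\bdalpha_+}(I_\Delta^{(n)})^{(k)}$, and taking the link at $G_\bdalpha$ on both sides delivers the required skeleton identity.

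The main obstacle will be the careful book-keeping of degenerate cases where $G_\bdalpha$ belongs to $\Delta$ but is not contained in any $k$-face of $\Delta$: both degree complexes then drop in dimension and one must adopt a consistent convention for the pure $d$-skeleton of a complex lacking $d$-faces. Fortunately both $\widetilde{H}_{k-\abs{G_\bdalpha}}$ vanish in that situation, so once the conventions agree there is nothing extra to prove. The extreme boundary case $\abs{G_\bdalpha} = k + 1$, where the ``top skeleton'' in question is the pure $(-1)$-skeleton, can be resolved by comparing \Cref{Prop4.5} with \Cref{3.2} directly to verify $\emptyset \in \Delta_\bdalpha(I_\Delta^n)$ if and only if $\emptyset \in \Delta_\bdalpha(I_\Delta^{(n)})$, which handles the $\widetilde{H}_{-1}$ contribution and completes the plan.
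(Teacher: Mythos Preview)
Your proposal is correct and follows essentially the same route as the paper: reduce via Takayama's formula to comparing $\widetilde{H}_{k-|G_\bdalpha|}$ of the two degree complexes, then show that the pure $(k-|G_\bdalpha|)$-skeletons of $\Delta_\bdalpha(I_\Delta^n)$ and $\Delta_\bdalpha(I_\Delta^{(n)})$ coincide, which suffices because top reduced homology only sees that skeleton. The only cosmetic difference is in the $G_\bdalpha\neq\emptyset$ step: the paper writes out a direct chain of equivalences for each $(k-|G_\bdalpha|)$-face $A$ (using \Cref{3.2}, \Cref{Prop4.5}, and \Cref{2.3}\ref{Lem4.3}), whereas you package the same computation via the general identity $\lk_X(G)^{(k-|G|)}=\lk_{X^{(k)}}(G)$ and then invoke \Cref{Lem4.7} for $\bdalpha_+$---unwinding your argument reproduces exactly the paper's equivalence chain.
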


\begin{proof}
    We have already seen that $\Delta=\Delta\left(I_{\Delta}^{n}\right)=\Delta(I_{\Delta}^{(n)})$ by \Cref{Lem4.8}.
    Now, take an arbitrary $\bdalpha \in \mathbb{Z}^s$. If $G_{\bdalpha}\notin \Delta$, then
    $$
    H_{\mathfrak{m}}^{k+1}(S / I_{\Delta}^{(n)})_{\bdalpha}=H_{\mathfrak{m}}^{k+1}\left(S / I_{\Delta}^{n}\right)_{\bdalpha}=0
    $$
    by \Cref{3.1}. Thus, we may assume instead that $G_{\bdalpha}\in \Delta$. 
    In this case, we claim that
    \begin{equation}
        \Delta_{\bdalpha}(I_{\Delta}^n)^{(k-\abs{G_{\bdalpha}})}=\Delta_{\bdalpha}(I_{\Delta}^{(n)})^{(k-\abs{G_{\bdalpha}})}.
        \label{eqn:equal-1}
    \end{equation}
    Notice that 
    \[
    \Delta_{\bdalpha_+}(I_{\Delta}^n)^{(k)}=\Delta_{\bdalpha_+}(I_{\Delta}^{(n)})^{(k)}.
    \] 
    by \Cref{Lem4.7}. Therefore, \eqref{eqn:equal-1} holds when $|G_\bdalpha|=0$. In the following, we will assume additionally that $|G_\bdalpha|\ge 1$. Now, $\Delta_{\bdalpha}(I_{\Delta}^{(n)})^{(k-\abs{G_{\bdalpha}})}$ is actually a simplicial complex over $[s]\setminus G_\bdalpha$ by \Cref{3.2}. Meanwhile, $\Delta_{\bdalpha}(I_{\Delta}^{(n)})^{k-\abs{G_{\bdalpha}}}$ is also a simplicial complex over $[s]\setminus G_\bdalpha$ by \Cref{2.3}\ref{Lem4.3}. Hence, to establish \eqref{eqn:equal-1} in this situation, we will take an arbitrary ($k-\abs{G_{\bdalpha}}$)-dimensional face $A\in \Delta$ such that $A\cap G_\bdalpha=\emptyset$. Now,
    \begin{align}
        A \in \Delta_{\bdalpha}(I_{\Delta}^{(n)})&\Longleftrightarrow A\in \calF(\Delta_{\bdalpha}(I_{\Delta}^{(n)})) \notag\\
        &\Longleftrightarrow \text{$A \in \mathcal{F}(\lk_{\Delta}(G_{\bdalpha}))$ and $\sum_{i\notin A\cup G_{\bdalpha}}\alpha_i \leq n-1$} \label{eqv-1}\\ 
        &\Longleftrightarrow \text{$A \cup G_{\bdalpha}\in\Delta$ and $\sum_{i\notin A\cup G_{\bdalpha}}\alpha_i \leq n-1$} \notag\\
        &\Longleftrightarrow A\cup G_{\bdalpha} \in \Delta_{\bdalpha_+}(I_{\Delta}^n)\label{eqv-2}\\
        &\Longleftrightarrow A \in \Delta_{\bdalpha}(I_{\Delta}^{n}).\label{eqv-3}
    \end{align}
    The equivalences in \eqref{eqv-1}, \eqref{eqv-2} and \eqref{eqv-3} come from \Cref{3.2}, \Cref{Prop4.5} and \Cref{2.3}\ref{Lem4.3} respectively. And this establishes the equality in \eqref{eqn:equal-1}.
    
    Notice that $\dim (\Delta_{\bdalpha}(I_{\Delta}^n)^{(k-\abs{G_{\bdalpha}})})=\dim (\Delta_{\bdalpha}(I_{\Delta}^{(n)})^{(k-\abs{G_{\bdalpha}})})=k-\abs{G_{\bdalpha}}$. Consequently, the boundaries
    $$
    B_{k-\abs{G_{\bdalpha}}}\left(\Delta_{\bdalpha}(I_{\Delta}^{n})^{(k-\abs{G_{\bdalpha}})}\right)=B_{k-\abs{G_{\bdalpha}}}\left(\Delta_{\bdalpha}(I_{\Delta}^{(n)})^{(k-\abs{G_{\bdalpha}})}\right)=0.
    $$
    Thus, by \eqref{eqn:equal-1}, the simplicial homologies
    \begin{align*}
       H_{k-\abs{G_{\bdalpha}}} \left(\Delta_{\bdalpha}(I_{\Delta}^{n});\mathbb{K}\right)&=\frac{Z_{k-\abs{G_{\bdalpha}}}\left(\Delta_{\bdalpha}(I_{\Delta}^{n})^{(k-\abs{G_{\bdalpha}})}\right)}{B_{k-\abs{G_{\bdalpha}}}\left(\Delta_{\bdalpha}(I_{\Delta}^{n})^{(k-\abs{G_{\bdalpha}})}\right)}=Z_{k-\abs{G_{\bdalpha}}}\left(\Delta_{\bdalpha}(I_{\Delta}^{n})^{(k-\abs{G_{\bdalpha}})}\right)\\
        &=Z_{k-\abs{G_{\bdalpha}}}\left(\Delta_{\bdalpha}(I_{\Delta}^{(n)})^{(k-\abs{G_{\bdalpha}})}\right)=\frac{Z_{k-\abs{G_{\bdalpha}}}\left(\Delta_{\bdalpha}(I_{\Delta}^{(n)})^{(k-\abs{G_{\bdalpha}})}\right)}{B_{k-\abs{G_{\bdalpha}}}\left(\Delta_{\bdalpha}(I_{\Delta}^{(n)})^{(k-\abs{G_{\bdalpha}})}\right)}\\
        &=
        H_{k-\abs{G_{\bdalpha}}} \left(\Delta_{\bdalpha}(I_{\Delta}^{(n)});\mathbb{K}\right),
    \end{align*}
    and consequently,
    $$
    \widetilde{H}_{k-\abs{G_{\bdalpha}}} \left(\Delta_{\bdalpha}(I_{\Delta}^{n});\mathbb{K}\right)=\widetilde{H}_{k-\abs{G_{\bdalpha}}} \left(\Delta_{\bdalpha}(I_{\Delta}^{(n)});\mathbb{K}\right).
    $$
    This equality together with \Cref{3.1} will yield
    $$
    H_{\mathfrak{m}}^{k+1}(S / I_{\Delta}^{(n)})_{\bdalpha} \neq 0 \Longleftrightarrow H_{\mathfrak{m}}^{k+1}\left(S / I_{\Delta}^{n}\right)_{\bdalpha} \neq 0,
    $$ 
    which finishes the proof.
\end{proof} 

In the rest of this paper, we will examine when $a_{k+1}(S/ I_{\Delta}^{(n)})$ is maximal. The following lemma allows us to clarify some details.
\begin{Lemma}
    \label{2.9}
    Let $\Delta$ be a $k$-dimensional complex on $[s]$ and $I_\Delta$ the Stanley-Reisner ideal in the polynomial ring $S=\mathbb{K} [x_{1},\dots, x_{s}]$ over a field $\mathbb{K}$. 
    Suppose that $\bdalpha=(\alpha_1,\dots,\alpha_s)\in \mathbb{Z}^s$ such that $\widetilde{H}_{k-\abs{G_{\bdalpha}}}(\Delta_{\bdalpha}(I_{\Delta}^{(n)}))\neq0$ and $G_{\bdalpha}\in \Delta$. Then, $\alpha_i\leq n-1$ for each $i \in [s]$. 
\end{Lemma}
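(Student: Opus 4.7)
The plan is to argue by contradiction from \Cref{3.2}. For any $i \in G_\bdalpha$ the bound $\alpha_i < 0 \le n-1$ is automatic, so the real content is to show $\alpha_j \le n-1$ for every $j \in [s]\setminus G_\bdalpha$; I will suppose towards contradiction that $\alpha_j \ge n$ for some such $j$ and derive that $\Delta_\bdalpha(I_\Delta^{(n)})$ has vanishing reduced homology in every degree.

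Because $G_\bdalpha \in \Delta$, \Cref{3.2} identifies the facets of $\Delta_\bdalpha(I_\Delta^{(n)})$ as the facets $F$ of $\lk_\Delta(G_\bdalpha)$ satisfying $\sum_{i \notin F\cup G_\bdalpha}\alpha_i \le n-1$. If such a facet $F$ did not contain $j$, then $j\notin F\cup G_\bdalpha$, so $\alpha_j$ would appear in the sum and force it to be at least $\alpha_j \ge n$, a contradiction. Hence every facet of $\Delta_\bdalpha(I_\Delta^{(n)})$ must contain $j$.

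Consequently $\Delta_\bdalpha(I_\Delta^{(n)})$ is either a (nonempty) cone with apex $j$ or has no facets at all (the void complex). A cone is contractible and the void complex is trivial, so in either situation every reduced homology group of $\Delta_\bdalpha(I_\Delta^{(n)})$ vanishes. In particular $\widetilde H_{k-\abs{G_\bdalpha}}(\Delta_\bdalpha(I_\Delta^{(n)})) = 0$, contradicting the hypothesis. The main point requiring care is the boundary case $\abs{G_\bdalpha}=k+1$, where $\lk_\Delta(G_\bdalpha) = \{\emptyset\}$: the ``every facet contains $j$'' argument then forces the facet set of $\Delta_\bdalpha(I_\Delta^{(n)})$ to be empty, so the degree complex is genuinely void, and one must invoke the convention $\widetilde H_{-1}(\emptyset) = 0$ (as opposed to $\widetilde H_{-1}(\{\emptyset\}) = \mathbb{K}$) in order to close the argument uniformly.
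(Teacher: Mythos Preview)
Your proof is correct and follows essentially the same approach as the paper's: both argue by contradiction that if some $\alpha_j\ge n$ with $j\notin G_{\bdalpha}$, then by \Cref{3.2} every relevant face of $\Delta_{\bdalpha}(I_\Delta^{(n)})$ must contain $j$, forcing a cone and hence vanishing reduced homology. The only cosmetic difference is that the paper restricts attention to the $(k-\abs{G_{\bdalpha}})$-dimensional faces and concludes that the pure top skeleton is a cone, whereas you work directly with all facets and conclude that the entire degree complex is a cone; your version is marginally more direct and also dispenses with the separate passage from $\widetilde H_{k-\abs{G_{\bdalpha}}}$ of the skeleton back to that of the full complex.
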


\begin{proof}
    Suppose that this is not true. Without loss of generality, we may assume that $\alpha_1\ge n$. 
    Let $A_1,\dots,A_l$ be the complete list of $(k-\abs{G_{\bdalpha}})$-dimensional faces in $\Delta_{\bdalpha}(I_{\Delta}^{(n)})$. 
    Then, according to \Cref{3.2}, we have $1 \in A_i$ for each $1 \leq i \leq l$. Hence,
    $\Delta_{\bdalpha}(I_{\Delta}^{(n)})^{(k-|G_\bdalpha|)}$ is a cone. Thanks to {\cite [Theorem 8.2]{MR755006}}, 
    $\widetilde{H}_{k-\abs{G_{\bdalpha}}}(\Delta_{\bdalpha}(I_{\Delta}^{(n)})^{(k-|G_\bdalpha|)})=0$. Since $\Delta_{\bdalpha}(I_{\Delta}^{(n)})$ is $(k-|G_\bdalpha|)$-dimensional, this implies that
    $\widetilde{H}_{k-\abs{G_{\bdalpha}}}(\Delta_{\bdalpha}(I_{\Delta}^{(n)}))=0$, a contradiction to our assumption. 
\end{proof}

Finally, we are ready to present the last main result of this paper.

\begin{Theorem}
    \label{Theo4.9}
    Let $\Delta$ be a $k$-dimensional complex on $[s]$ and $I_\Delta$ the Stanley-Reisner ideal in the polynomial ring $S=\mathbb{K} [x_{1},\dots, x_{s}]$ over a field $\mathbb{K}$.
    Then  
    $$
    a_{k+1}(S/ I_{\Delta}^{(n)}) \leq (k+2)(n-1)
    $$ 
    for each positive integer $n$. Furthermore, the following statements are equivalent for $n\ge 2$: 
    \begin{enumerate}[a]
        \item \label{Theo4.9-a}
            $a_{k+1}(S/ I_{\Delta}^{(n)})=(k+2)(n-1)$;
        \item \label{Theo4.9-b}
            there exists a subset $B=\{p_1,\dots,p_{k+2}\}\subseteq [s]$ such that $\mathcal{F}(\Delta|_B)$ is a $k$-dimensional sphere, namely
            $$
            \mathcal{F}(\Delta|_B)=\Set{B\setminus\{p_i\}:1 \leq i \leq k+2}.
            $$
    \end{enumerate} 
\end{Theorem}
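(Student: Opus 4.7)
The plan is to compute $a_{k+1}(S/I_\Delta^{(n)})$ by analyzing $H_\mathfrak{m}^{k+1}(S/I_\Delta^{(n)})_\bdalpha$ via Takayama's formula (\Cref{3.1}): this group is nonzero precisely when $G_\bdalpha \in \Delta$ (by \Cref{Lem4.8}) and $\widetilde{H}_{k-l}(\Delta_\bdalpha(I_\Delta^{(n)})) \neq 0$, where $l \coloneqq |G_\bdalpha|$. For the upper bound, assume this nonvanishing. By \Cref{2.9}, $\alpha_i \leq n-1$ for every $i$. In the main case $l \leq k$, a nonzero $(k-l)$-cycle in $\Delta_\bdalpha$ has at least one facet $F$ in its support; by \Cref{3.2}, $F \cup G_\bdalpha$ is a $k$-dimensional facet of $\Delta$ (so $|F| = k-l+1$) and $\sum_{i \notin F \cup G_\bdalpha}\alpha_i \leq n-1$. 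Splitting
\[
|\bdalpha| = \sum_{i \in F}\alpha_i + \sum_{i \in G_\bdalpha}\alpha_i + \sum_{i \notin F\cup G_\bdalpha}\alpha_i \leq (k-l+1)(n-1) - l + (n-1) = (k-l+2)(n-1) - l,
\]
so $|\bdalpha| \leq (k+2)(n-1)$, with equality forcing $l = 0$ when $n \geq 2$. The remaining case $l = k+1$ forces $\Delta_\bdalpha \in \{\emptyset, \{\emptyset\}\}$ by \Cref{3.2}, and a direct bound in either subcase yields $|\bdalpha| \leq n - k - 2 < (k+2)(n-1)$.

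For the implication (b) $\Rightarrow$ (a), I will take $\alpha_{p_i} = n-1$ for $p_i \in B$ and $\alpha_j = 0$ otherwise, so $G_\bdalpha = \emptyset$ and $|\bdalpha| = (k+2)(n-1)$. Each $B \setminus \{p_i\}$ is a facet of $\Delta$ by hypothesis (b), and $\sum_{j \notin B \setminus \{p_i\}}\alpha_j = \alpha_{p_i} = n-1$, so \Cref{3.2} places every $B \setminus \{p_i\}$ in $\mathcal{F}(\Delta_\bdalpha(I_\Delta^{(n)}))$. Hence the chain $\sum_{i=1}^{k+2}(-1)^i[B\setminus\{p_i\}]$, the topological boundary of $[B]$, is a nonzero $k$-cycle in $\Delta_\bdalpha$; since $\dim\Delta_\bdalpha \leq k$ there are no $k$-boundaries, so $\widetilde{H}_k(\Delta_\bdalpha) \neq 0$ and Takayama's formula delivers $H_\mathfrak{m}^{k+1}(S/I_\Delta^{(n)})_\bdalpha \neq 0$, giving $a_{k+1}(S/I_\Delta^{(n)}) \geq (k+2)(n-1)$.

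The main obstacle is the converse (a) $\Rightarrow$ (b), which requires extracting the geometric sphere condition from the equality case. Let $\bdalpha$ realize $a_{k+1}(S/I_\Delta^{(n)}) = (k+2)(n-1)$; the upper-bound computation forces $l = 0$, $\alpha_i = n-1$ for each $i$ in some facet $F^*$ of $\Delta_\bdalpha$ supporting a nonzero $k$-cycle, and $\sum_{i \notin F^*}\alpha_i = n-1$. For any other facet $F$ in the support, $\sum_{i \notin F}\alpha_i \leq n-1$ combined with $\alpha_i = n-1$ on $F^*$ yields $|F^* \setminus F|(n-1) \leq n-1$; since $n \geq 2$ and $F \neq F^*$, necessarily $|F^* \setminus F| = 1$, and writing $F = (F^* \setminus \{q\}) \cup \{q'\}$ the same inequality forces $\alpha_i = 0$ for $i \notin F^* \cup \{q'\}$. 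If two such facets introduced distinct new vertices $q_1' \neq q_2'$, combining would give $\alpha_i = 0$ off $F^*$, contradicting $\sum_{i \notin F^*}\alpha_i = n-1 > 0$; hence the new vertex $q'$ is unique. Setting $B \coloneqq F^* \cup \{q'\}$, which has $k+2$ elements, the support of the cycle lies in $\{B \setminus \{p_i\} : p_i \in B\}$. The $(k-1)$-face $B \setminus \{p_i,p_j\}$ lies only in $B \setminus \{p_i\}$ and $B \setminus \{p_j\}$, so if any one $B \setminus \{p_i\}$ were absent, the boundary equations would force every cycle coefficient to vanish; nontriviality therefore forces all $k+2$ subsets $B \setminus \{p_i\}$ to be facets of $\Delta$, and since $B \notin \Delta$ by dimension, $\mathcal{F}(\Delta|_B) = \{B \setminus \{p_i\} : 1 \leq i \leq k+2\}$, establishing (b).
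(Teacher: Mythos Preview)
Your proof is correct and follows essentially the same route as the paper's: both use Takayama's formula together with \Cref{3.2} and \Cref{2.9} to obtain the bound, then in the equality case pin down $\bdalpha$ to $(n-1)$'s on a $(k+2)$-set $B$ and show the support of the $k$-cycle must be the full boundary $\partial B$. The only notable difference is in that last step of (a)$\Rightarrow$(b): the paper observes that a proper pure subcomplex of $\partial B$ is a cone (hence collapsible, hence has $\widetilde{H}_k=0$), whereas you reach the same conclusion by the equivalent elementary argument on cycle coefficients.
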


\begin{proof}
    Take arbitrary $\bdalpha \in \mathbb{Z}^s$ with $H_{\mathfrak{m}}^{k+1}(S / I_{\Delta}^{(n)})_{\bdalpha} \neq 0$. According to \Cref{3.1}, this simply means that $\widetilde{H}_{k-|G_\bdalpha|} (\Delta_{\bdalpha}(I_{\Delta}^{(n)})) \neq 0$ and $G_{\bdalpha}\in \Delta$. Since 
    \[
    \dim(\Delta_{\bdalpha}(I_{\Delta}^{(n)}))\le \dim(\Delta)-|G_\bdalpha|= k-|G_\bdalpha|
    \]
    by \Cref{3.2}, the nonvanishing of 
    $\widetilde{H}_{k-|G_\bdalpha|} (\Delta_{\bdalpha}(I_{\Delta}^{(n)}))$
    implies particularly that $\dim(\Delta_{\bdalpha}(I_{\Delta}^{(n)}))=k-|G_\bdalpha|$.  Therefore, we can take some $F \in \Delta_{\bdalpha}(I_{\Delta}^{(n)})$ with $\dim(F)=k-|G_\bdalpha|$. Notice that $F\cap G_\bdalpha=\emptyset$, $\sum_{i \notin F\cup G_{\bdalpha}}\alpha_i\leq n-1$ and $\sum_{i \in F}\alpha_i \leq \abs{F}(n-1)$ by \Cref{3.2} and \Cref{2.9} respectively. Henceforth,
    \begin{align*}
        |\bdalpha|&=\sum_{i \in F}\alpha_i+\sum_{i \notin F\cup G_{\bdalpha}}\alpha_i+\sum_{i \in G_{\bdalpha}}\alpha_i\\ 
        &\leq \abs{F}(n-1)+(n-1)-\abs{G_{\bdalpha}}\\ 
        &=(k-|G_\bdalpha|+2)(n-1)-\abs{G_{\bdalpha}}\\
        &\leq (k+2)(n-1),
    \end{align*}
    establishing the expected upper bound. 
    It remains to prove the equivalence of \ref{Theo4.9-a} and \ref{Theo4.9-b} when $n\ge 2$.
        
    \textbf{\ref{Theo4.9-a}$\Rightarrow$\ref{Theo4.9-b}:}
    Suppose that $a_{k+1}(S/ I_{\Delta}^{(n)})=(k+2)(n-1)$. Then, we can find some $\bdalpha\in\ZZ^s$ such that $ H_{\mathfrak{m}}^{k+1}(S / I_{\Delta}^{(n)})_{\bdalpha} \neq 0$ and $\abs{\bdalpha}=(k+2)(n-1)$. 
    From the above argument, we can see that $\abs{G_{\bdalpha}}=0$, i.e., $G_\bdalpha=\emptyset$. Furthermore, there exists some $F\in \Delta_{\bdalpha}(I_{\Delta}^{(n)})$ such that $\dim(F)=k$. And since $\widetilde{H}_{k} (\Delta_{\bdalpha}(I_{\Delta}^{(n)}))\ne 0$, $\Delta_{\bdalpha}(I_{\Delta}^{(n)})\ne \braket{F}$.
    Assume that $F=[k+1]$ for convenience. Now, $\sum_{i \in [k+1]}\alpha_i=(k+1)(n-1)$ and $\sum_{i \notin [k+1]}\alpha_i=n-1$. Hence $\alpha_1=\alpha_2=\dots=\alpha_{k+1}=n-1$ by \Cref{2.9}. Our task is then reduced to describing $\alpha_j$ when $k+1<j\leq s$.
    
    We claim that there exists precisely one $j$ with $\alpha_j>0$ and $k+1<j \leq s$. If this is not true, we may assume that $\alpha_{k+2},\alpha_{k+3}>0$. As $\Delta_{\bdalpha}(I_{\Delta}^{(n)})\ne \braket{F}$, we may find some facet $G$ of $\Delta_{\bdalpha}(I_{\Delta}^{(n)})$ with $G \neq F$. Since $\dim(G)\le \dim(F)=k$, either $|F \setminus G|\ge 2$ or $|\{k+2,k+3\}\setminus G|\ge |F \setminus G|= 1$.
    In both cases, we have $\sum_{i\notin G}\alpha_i>n-1$, which implies that $G\notin \Delta_{\bdalpha}(I_{\Delta}^{(n)})$ by \Cref{3.2}, a contradiction to the choice of $G$. 

    Therefore, we may assume that $\alpha_{k+2}=n-1$ and $\alpha_h=0$ for $k+2<h\leq s$. 
    An argument as in the previous paragraph also shows that $G\subseteq B\coloneqq[k+2]$ and $\dim(G)=k$ for any $G\in \calF(\Delta_{\bdalpha}(I_{\Delta}^{(n)}))$. If the pure simplicial complex $\Delta_{\bdalpha}(I_{\Delta}^{(n)})|_B$ is not a sphere, then it is collapsible and consequently $\widetilde{H}_{k} (\Delta_{\bdalpha}(I_{\Delta}^{(n)})) = 0$, contradicting to our assumption. Hence $\Delta|_B=\Delta_{\bdalpha}(I_{\Delta}^{(n)})|_B$ is indeed a sphere.

    \textbf{\ref{Theo4.9-b}$\Rightarrow$\ref{Theo4.9-a}:} Without loss of generality, we may assume that $B=[k+2]$ and $\Delta|_B$ is a $k$-dimensional sphere. Take $\bdalpha=(\alpha_1,\dots,\alpha_s)\in \NN^s$ where $\alpha_i=n-1$ for $0 \leq i \leq k+2$ and $\alpha_i=0$ for $k+2 < i \leq s$. Then by \Cref{3.2}, we have $\Delta|_B=\Delta_{\bdalpha}(I_{\Delta}^{(n)})$ and $\widetilde{H}_{k} (\Delta_{\bdalpha}(I_{\Delta}^{(n)})) \neq 0$. So $H_{\mathfrak{m}}^{k+1}(S / I_{\Delta}^{(n)})_{\bdalpha} \neq 0$ via \Cref{3.1}, which implies that $\alpha_{k+1}(S/ I_{\Delta}^{(n)})\geq(k+2)(n-1)$. Since $a_{k+1}(S/ I_{\Delta}^{(n)}) \leq (k+2)(n-1)$ in general, the proof is completed.  
\end{proof}

\begin{acknowledgment*}
    The second author is partially supported by the ``Anhui Initiative in Quantum Information Technologies'' (No.~AHY150200) and the ``Fundamental Research Funds for the Central Universities''.
\end{acknowledgment*}

\begin{bibdiv}
	\begin{biblist}
		
		\bib{MR3014449}{book}{
			author={Brodmann, M.~P.},
			author={Sharp, R.~Y.},
			title={Local cohomology},
			edition={Second edition},
			series={Cambridge Studies in Advanced Mathematics},
			publisher={Cambridge University Press, Cambridge},
			date={2013},
			volume={136},
			ISBN={978-0-521-51363-0},
		}
		
		\bib{MR1251956}{book}{
			author={Bruns, Winfried},
			author={Herzog, J\"{u}rgen},
			title={Cohen-{M}acaulay rings},
			series={Cambridge Studies in Advanced Mathematics},
			publisher={Cambridge University Press, Cambridge},
			date={1993},
			volume={39},
			ISBN={0-521-41068-1},
		}
		
		\bib{MR3779569}{incollection}{
			author={Dao, Hailong},
			author={De~Stefani, Alessandro},
			author={Grifo, Elo\'{\i}sa},
			author={Huneke, Craig},
			author={N\'{u}\~{n}ez Betancourt, Luis},
			title={Symbolic powers of ideals},
			date={2018},
			booktitle={Singularities and foliations. geometry, topology and
				applications},
			series={Springer Proc. Math. Stat.},
			volume={222},
			publisher={Springer, Cham},
			pages={387\ndash 432},
		}
		
		\bib{MR1322960}{book}{
			author={Eisenbud, David},
			title={Commutative algebra},
			series={Graduate Texts in Mathematics},
			publisher={Springer-Verlag, New York},
			date={1995},
			volume={150},
			ISBN={0-387-94268-8; 0-387-94269-6},
			url={https://doi.org/10.1007/978-1-4612-5350-1},
		}
		
		\bib{MR494707}{article}{
			author={Goto, Shiro},
			author={Watanabe, Keiichi},
			title={On graded rings. {I}},
			date={1978},
			ISSN={0025-5645},
			journal={J. Math. Soc. Japan},
			volume={30},
			pages={179\ndash 213},
			url={https://doi.org/10.2969/jmsj/03020179},
		}
		
		\bib{MR2724673}{book}{
			author={Herzog, J\"{u}rgen},
			author={Hibi, Takayuki},
			title={Monomial ideals},
			series={Graduate Texts in Mathematics},
			publisher={Springer-Verlag London, Ltd., London},
			date={2011},
			volume={260},
			ISBN={978-0-85729-105-9},
			url={https://doi.org/10.1007/978-0-85729-106-6},
		}
		
		\bib{MR1881017}{article}{
			author={Herzog, J\"{u}rgen},
			author={Hoa, L\^{e}~Tu\^{a}n},
			author={Trung, Ng\^{o}~Vi\^{e}t},
			title={Asymptotic linear bounds for the {C}astelnuovo-{M}umford
				regularity},
			date={2002},
			ISSN={0002-9947},
			journal={Trans. Amer. Math. Soc.},
			volume={354},
			pages={1793\ndash 1809},
			url={https://doi.org/10.1090/S0002-9947-02-02932-X},
		}
		
		\bib{MR3011344}{article}{
			author={Herzog, J\"{u}rgen},
			author={Rauf, Asia},
			author={Vladoiu, Marius},
			title={The stable set of associated prime ideals of a polymatroidal
				ideal},
			date={2013},
			ISSN={0925-9899},
			journal={J. Algebraic Combin.},
			volume={37},
			pages={289\ndash 312},
			url={https://doi.org/10.1007/s10801-012-0367-z},
		}
		
		\bib{MR2670214}{article}{
			author={Hoa, L\^{e}~Tu\^{a}n},
			author={Trung, Tr\^{a}n~Nam},
			title={Partial {C}astelnuovo-{M}umford regularities of sums and
				intersections of powers of monomial ideals},
			date={2010},
			ISSN={0305-0041},
			journal={Math. Proc. Cambridge Philos. Soc.},
			volume={149},
			pages={229\ndash 246},
			url={https://doi.org/10.1017/S0305004110000071},
		}
		
		\bib{MR3482347}{article}{
			author={Hoa, Le~Tuan},
			author={Trung, Tran~Nam},
			title={Castelnuovo-{M}umford regularity of symbolic powers of
				two-dimensional square-free monomial ideals},
			date={2016},
			ISSN={1939-0807},
			journal={J. Commut. Algebra},
			volume={8},
			pages={77\ndash 88},
			url={https://doi.org/10.1216/JCA-2016-8-1-77},
		}
		
		\bib{MR2355715}{book}{
			author={Iyengar, Srikanth~B.},
			author={Leuschke, Graham~J.},
			author={Leykin, Anton},
			author={Miller, Claudia},
			author={Miller, Ezra},
			author={Singh, Anurag~K.},
			author={Walther, Uli},
			title={Twenty-four hours of local cohomology},
			series={Graduate Studies in Mathematics},
			publisher={American Mathematical Society, Providence, RI},
			date={2007},
			volume={87},
			ISBN={978-0-8218-4126-6},
			url={https://doi.org/10.1090/gsm/087},
		}
		
		\bib{arXiv:1808.07266}{article}{
			author={{Lu}, Dancheng},
			title={Geometric regularity of powers of two-dimensional squarefree
				monomial ideals},
			date={2018},
			eprint={arXiv:1808.07266},
		}
		
		\bib{MR2110098}{book}{
			author={Miller, Ezra},
			author={Sturmfels, Bernd},
			title={Combinatorial commutative algebra},
			series={Graduate Texts in Mathematics},
			publisher={Springer-Verlag, New York},
			date={2005},
			volume={227},
			ISBN={0-387-22356-8},
		}
		
		\bib{MR2558862}{article}{
			author={Minh, Nguyen~Cong},
			author={Trung, Ngo~Viet},
			title={Cohen-{M}acaulayness of powers of two-dimensional squarefree
				monomial ideals},
			date={2009},
			ISSN={0021-8693},
			journal={J. Algebra},
			volume={322},
			pages={4219\ndash 4227},
			url={https://doi.org/10.1016/j.jalgebra.2009.09.014},
		}
		
		\bib{MR755006}{book}{
			author={Munkres, James~R.},
			title={Elements of algebraic topology},
			publisher={Addison-Wesley Publishing Company, Menlo Park, CA},
			date={1984},
			ISBN={0-201-04586-9},
		}
		
		\bib{MR3691985}{article}{
			author={Nasseh, Saeed},
			author={Sather-Wagstaff, Sean},
			title={Vanishing of {E}xt and {T}or over fiber products},
			date={2017},
			ISSN={0002-9939},
			journal={Proc. Amer. Math. Soc.},
			volume={145},
			pages={4661\ndash 4674},
			url={https://doi.org/10.1090/proc/13633},
		}
		
		\bib{MR3988200}{article}{
			author={Nguyen, Hop~D.},
			author={Vu, Thanh},
			title={Homological invariants of powers of fiber products},
			date={2019},
			ISSN={0251-4184},
			journal={Acta Math. Vietnam.},
			volume={44},
			pages={617\ndash 638},
			url={https://doi.org/10.1007/s40306-018-00317-y},
		}
		
		\bib{MR3912960}{article}{
			author={Nguyen, Hop~D.},
			author={Vu, Thanh},
			title={Powers of sums and their homological invariants},
			date={2019},
			ISSN={0022-4049},
			journal={J. Pure Appl. Algebra},
			volume={223},
			pages={3081\ndash 3111},
			url={https://doi.org/10.1016/j.jpaa.2018.10.010},
		}
		
		\bib{arXiv:1910.14140v2}{article}{
			author={{O'Rourke}, Jonathan~L.},
			title={Local cohomology and degree complexes of monomial ideals},
			date={2019},
			eprint={arXiv:1910.14140},
		}
		
		\bib{MR2165349}{article}{
			author={Takayama, Yukihide},
			title={Combinatorial characterizations of generalized {C}ohen-{M}acaulay
				monomial ideals},
			date={2005},
			ISSN={1220-3874},
			journal={Bull. Math. Soc. Sci. Math. Roumanie (N.S.)},
			volume={48(96)},
			pages={327\ndash 344},
		}
		
	\end{biblist}
\end{bibdiv} 

\end{document}